\newtheorem{theorem}{Theorem}[section]
\newtheorem{thm}{Theorem}[section]
\newtheorem{lem}[thm]{Lemma}
\newtheorem{cor}[thm]{Corollary}
\newtheorem{prop}[thm]{Proposition}
\theoremstyle{definition}
\newtheorem{defi}[thm]{Definition}
\newtheorem{ex}[thm]{Example}
\theoremstyle{remark}
\newtheorem{rmk}[thm]{Remark}
\numberwithin{equation}{section} 
\newcommand{\holim}{\textup{holim}}
\newcommand{\emb}{\textup{emb}}
\newcommand{\op}{\textup{op}}
\newcommand{\N} { \mathbb{N} }  
\newcommand{\R} { \mathbb{R} }
\newcommand{\Tot}{\textup{Tot}}
\newcommand{\con}{\textup{con}}
\newcommand{\homeo}{\textup{homeo}}
\newcommand{\holink}{\textup{holink}}
\newcommand{\haut}{\textup{haut}}
\newcommand{\Fin}{\textup{Fin}}
\newcommand{\twosub}[2]{\begin{array}{cc}
\scriptstyle{#1} \\ [-1mm] \scriptstyle{#2} \end{array}}
\newcommand{\holimsub}[1]{\begin{array}[t]{cc} \textup{holim} \\ [-1mm]
\scriptstyle{#1} \end{array}}
\newcommand{\ctsholimsub}[1]{\begin{array}[t]{cc} \textup{ctsholim} \\ [-1mm]
\scriptstyle{#1} \end{array}}
\newcommand{\hocolimsub}[1]{\begin{array}[t]{cc} \textup{hocolim} \\
[-1.7mm] \scriptstyle{#1} \end{array}}
\newcommand{\minsub}[1]{\begin{array}[t]{cc} \textup{min} \\
[-1.2mm] \scriptstyle{#1} \end{array}}
\begin{document}

\title{Occupants in simplicial complexes}
\author{Steffen Tillmann}%
\address{Math.~Institut, Universit\"at M\"unster, 48149 M\"unster, Einsteinstrasse 62, Germany}%
\email{s${}_-$till05@uni-muenster.de} 


\begin{abstract} 

Let $M$ be a smooth manifold and $K\subset M$ be a simplicial complex of codimension at least 3. Functor calculus methods lead to a homotopical formula of $M\setminus K$ in terms of spaces $M\setminus T$ where $T$ is a finite subset of $K$. This is a generalization of the author's previous work with Michael Weiss \cite{occupants}
where the subset $K$ is assumed to be a smooth submanifold of $M$
and uses his generalization of manifold calculus adapted for simplicial complexes.
%
\end{abstract}
\maketitle

\tableofcontents\setcounter{tocdepth}{3}

\section{Introduction}

Let $M$ be a smooth manifold and let $K$ be a simplicial complex\footnote[1]{By simplicial complex we mean a geometric simplicial complex, i.e. the geometric realization of a combinatorial simplicial complex.} of codimension $\geq 3$. Throughout this paper we assume that $K$ is a subset of $M$ such that each (closed) simplex of $K$ is smoothly embedded in $M$. We would like to recover the homotopy type of $M\setminus K$ from the homotopy types of the spaces $M\setminus T$ where $T$ is a finite subset of $K$. The finite subset $T\subset K$ could be regarded as a finite set of occupants. \\

It turns out that it is possible to find such a homotopical formula, but only if we allow standard thickenings of the finite subsets $T\subset K$ and inclusions between them. We get an interesting poset regarded as a category - the configuration category $\con (K)$ of $K$. 
The objects of $\con(K)$ are pairs $(T,\rho)$ where $T$ is a finite subset of $K$ and $\rho: T\rightarrow (0,\infty)$ is a function which assigns to each element $t\in T$ the radius $\rho(t)$ of the corresponding thickening using a standard metric on $K$. These pairs have to fulfil certain conditions, e.g. the thickenings of the elements $t\in T$ are pairwise disjoint (for a precise definition, see section \ref{confcat}). For each object $(T,\rho)$ in $\con(K)$, we get a corresponding open subset $V_{K}(T,\rho) \subset K$ which is the disjoint union of the open balls of radius $\rho(t)$ about the points $t\in T$. 
We note that for each element $(T,\rho)$ of the configuration category, there is an inclusion
\[
M\setminus K \rightarrow M\setminus V_{K}(T,\rho)
\]
and thus a map from $M\setminus K$ in to the associated homotopy limit.
The following theorem is our main result:

\begin{theorem}\label{maintheorem}
If the codimension $\dim(M)-\dim(K)$ is at least $3$, the canonical map
\[
M\setminus K \rightarrow \holimsub{(T,\rho)\in \con(K)} M\setminus V_{K}(T,\rho)
\]
is a weak equivalence.
\end{theorem}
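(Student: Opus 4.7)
The plan is to recognize the right-hand side as the value $T_\infty F(K)$ of the Taylor tower of the contravariant functor $F(U) := M \setminus U$, defined on an appropriate category of open subsets $U \subset K$, within the author's generalization of manifold calculus to simplicial complexes. Under this interpretation, $\con(K)$ is the indexing category for configurations of disjoint open balls in $K$, and the theorem becomes the statement that $F$ is \emph{analytic}: the comparison map $F(K) \to T_\infty F(K)$ is a weak equivalence.

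To prove analyticity, I would argue by induction on the skeletal filtration $K^{(0)} \subset K^{(1)} \subset \cdots$, passing to the inverse limit if $K$ is infinite-dimensional. The base case $K = K^{(0)}$, a discrete subset of $M$, is handled by the original Tillmann--Weiss theorem \cite{occupants} applied to the $0$-dimensional smooth submanifold $K^{(0)}$, whose codimension $\dim M$ is at least $3$. For the inductive step, assuming the result for $K^{(n-1)}$, one adds the open $n$-simplices as follows: choose small, disjoint regular neighborhoods $N_\alpha \subset K^{(n)}$ of the open $n$-simplices $\sigma_\alpha$ (deformation retracting onto $\sigma_\alpha$ within $K^{(n)}$). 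The cover of $K^{(n)}$ by $K^{(n-1)}$ and the $N_\alpha$ induces, via a Mayer--Vietoris style pullback argument in $M$, a decomposition of $M \setminus K^{(n)}$. A parallel decomposition of $\con(K^{(n)})$, according to where configuration points lie with respect to the $N_\alpha$, produces the matching decomposition on the right-hand side; the resulting pieces are handled either by induction (for the contributions of $K^{(n-1)}$ and the intersections $K^{(n-1)} \cap N_\alpha$) or by the smooth-submanifold theorem \cite{occupants} (for the $\sigma_\alpha$ themselves, which are smooth submanifolds of codimension $\dim M - n \geq 3$).

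The main obstacle is ensuring that the decomposition of the configuration category $\con(K^{(n)})$ is genuinely compatible with the Mayer--Vietoris decomposition of $M \setminus K^{(n)}$. Concretely, one needs a cofinality argument showing that every object $(T,\rho) \in \con(K^{(n)})$ can be refined (by shrinking $\rho$) so that its thickenings split cleanly across the cover, and that the homotopy limit over $\con(K^{(n)})$ is then a homotopy pullback of the homotopy limits over the pieces. The codimension $\geq 3$ hypothesis enters twice: first, to reduce the smooth-submanifold contributions to \cite{occupants}; second, to provide the general-position estimates that make the relevant squares into homotopy pullbacks. The most delicate verification is near the boundary faces where simplices of different dimensions meet, and this is where I expect the technical heart of the argument to lie. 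Finally, the passage from finite-dimensional skeleta to all of $K$ follows by a standard cofinality argument for sequential homotopy limits, combined with the observation that $M \setminus K = \bigcap_n (M \setminus K^{(n)})$ up to weak equivalence, which itself uses the codimension hypothesis.
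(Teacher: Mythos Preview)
Your proposal has a gap at the very first step and then diverges substantially from the paper's argument.

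First, the functor $U \mapsto M \setminus U$ is \emph{not} a good functor on $\mathcal{O}(K)$: it fails to take stratified isotopy equivalences to weak equivalences (the paper notes this explicitly, pointing to a counterexample in \cite{occupants}). Hence you cannot identify the right-hand side with $T_\infty F(K)$ for this $F$ and invoke the calculus machinery directly. The paper's fix is to replace it by $F(V) := \holim_{C \subset V,\,C\text{ compact}} M \setminus C$, prove that this replacement is good (Lemma~\ref{Fisgood}), and only at the end compare it with the naive functor via an easy commutative square.

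Second, the paper does not induct on skeleta or use any Mayer--Vietoris decomposition. It proves directly that the corrected $F$ is $(m-2)$-analytic by checking the hypotheses of Proposition~\ref{vergleichmcfc}: for pairwise disjoint codimension-zero pieces $A_0,\dots,A_r$ attached to $P$ with relative handle indices $q_{A_i}$, the cube $T \mapsto M \setminus \mathrm{int}(P \cup A_T)$ is strongly cocartesian (an elementary check using disjointness of the $A_i$), and the maps $M\setminus W_{[r]} \to M\setminus W_{[r]\setminus\{i\}}$ are $(m-q_{A_i}-1)$-connected because the target is obtained from the source by attaching cells of dimension $\geq m - q_{A_i}$. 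Blakers--Massey then gives the cartesian estimate, and Theorem~\ref{analytic} yields convergence since $m-2 > \kappa$. A separate short lemma (Lemma~\ref{vergleich}) identifies the holim over $\con(K)$ with the holim over $\cup_k\mathcal{O}k(K)$.

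Even with the goodness issue patched, your skeletal route would face real obstacles. The open $n$-simplices are non-compact, so the result of \cite{occupants} does not apply to them as stated; and the Mayer--Vietoris decomposition of $\con(K^{(n)})$ you need is not a formal consequence of anything --- configurations with points in several pieces of the cover do not split, and the ``cofinality by shrinking $\rho$'' you gesture at is precisely the hard step you have not supplied. The paper's handle-index argument bypasses all of this.
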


Theorem \ref{maintheorem} is an application of manifold calculus adapted for simplicial complexes as developed in \cite{manifoldcalculusadapted}.
In this paper the configuration category $\con(K)$ is a convenient replacement of
the category of special open subsets $\cup_{k} \mathcal{O}k(K)$ there.
Recall: The objects of $\cup_{k} \mathcal{O}k(K)$ are those open subsets $V$ of $K$ which have finitely many components and where each component of $V$ is stratified isotopy equivalent to the open star of some simplex $\sigma$ in $K$ (intersection of the open stars of the vertices of $\sigma$). Roughly speaking, a stratified isotopy equivalence is a simplexwise smooth isotopy equivalence. \\

As to be expected from manifold calculus, there is a stronger version of our main result with restricted cardinalities (see Theorem \ref{thetheorem}). More precisely, the map from $M\setminus K$ into the homotopy limit over the full subcategory of $\con (K)$ of the set with restricted cardinality is highly connected depending on that cardinality. \\

Using Theorem \ref{maintheorem}, we can prove an approximation theorem of the boundary of the manifold in some cases.
Let $M$ be a Riemannian manifold with boundary $\partial M$. We can recover the homotopy type of $\partial M$ from the homotopy types of the spaces $M\setminus T$ where $T$ is a finite subset of $M\setminus \partial M$. Again, we need to allow thickenings of the finite subsets $T$ and inclusions between them. Therefore, we consider the configuration category $\con(M\setminus \partial M)$ of the interior of $M$ (see section \ref{formulation2} for a precise definition). For each object $(T,\rho)$ in $\con(M\setminus \partial M)$, we have again a corresponding open subset $V_{M\setminus\partial M}(T,\rho)$ (using the Riemannian metric) which is the union of the open balls of radius $\rho(t)$ about the points $t\in T$. The inclusions
\[
\partial M \rightarrow M\setminus V_{M\setminus\partial M}(T,\rho)
\]
induce a map from $\partial M$ into the homotopy limit taken over the category $\con(M\setminus \partial M)$.
Assume now that $M$ is a \textit{nice neighborhood} of a compact simplicial complex $K \subset M$ as defined in Definition \ref{niceneighb}. In particular, this means that $K$ is a retract of $M$ weakly equivalent to it.

\begin{theorem}\label{mainvariation}
If the codimension $\dim(M)-\dim(K)$ is at least $3$, the canonical map 
\[
\partial M \rightarrow \holimsub{(T,\rho)\in \con(M\setminus\partial M)} M\setminus V_{M\setminus\partial M}(T,\rho)
\]
is a weak equivalence.
\end{theorem}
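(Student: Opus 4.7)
The plan is to deduce Theorem \ref{mainvariation} from Theorem \ref{maintheorem} by exploiting that $M$ is a nice neighborhood of $K$. First I would use the nice-neighborhood structure to produce a deformation retraction of $M \setminus K$ onto $\partial M$, yielding a weak equivalence $\partial M \simeq M \setminus K$ compatible with the canonical maps into both homotopy limits. Combining this with Theorem \ref{maintheorem} applied to $(M,K)$, one obtains
\[
\partial M \; \simeq \; M \setminus K \; \xrightarrow{\;\simeq\;} \; \holimsub{(T,\rho)\in \con(K)} M \setminus V_{K}(T,\rho),
\]
so that Theorem \ref{mainvariation} is reduced to comparing this homotopy limit with the one taken over $\con(M \setminus \partial M)$, compatibly with the two assembly maps.

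For the comparison I would proceed in two sub-steps. Restricted to the subcategory $\con(K) \hookrightarrow \con(M \setminus \partial M)$ of configurations supported on $K$ (possibly after shrinking radii to guarantee disjointness with respect to the ambient metric), the two functors $(T,\rho) \mapsto M \setminus V_{K}(T,\rho)$ and $(T,\rho) \mapsto M \setminus V_{M \setminus \partial M}(T,\rho)$ should be naturally weakly equivalent: for small enough $\rho$, $V_{M \setminus \partial M}(T,\rho)$ is an ambient thickening of $V_{K}(T,\rho)$ whose difference is locally a punctured normal neighborhood of codimension $\geq 3$, and removing it from $M \setminus V_{K}(T,\rho)$ does not change the weak homotopy type by a local excision plus connectivity argument. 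The second sub-step is to show that restriction along $\con(K) \hookrightarrow \con(M \setminus \partial M)$ induces an equivalence on the relevant homotopy limits; intuitively, using the retraction $r : M \to K$ from the nice-neighborhood structure, any ambient configuration can be flowed onto $K$ while appropriately rescaling its radii.

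The main obstacle will be this last step, a homotopy cofinality/initiality statement for the inclusion $\con(K) \hookrightarrow \con(M \setminus \partial M)$. The naive pushdown $(T,\rho)\mapsto (r(T),\rho)$ need not respect the disjointness required in $\con$, and distinct points of $T$ may collide under $r$, so one must interpolate coherently in positions and radii inside the configuration category. The codimension-$\geq 3$ hypothesis should provide the contractibility of the relevant slice categories, paralleling the convergence arguments for manifold calculus adapted to simplicial complexes in \cite{manifoldcalculusadapted}; packaging this interpolation coherently across all objects of $\con(M \setminus \partial M)$ is the chief technical point.
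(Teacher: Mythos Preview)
Your overall architecture matches the paper's: first identify $\partial M\simeq M\setminus K$ via the nice-neighborhood structure, invoke Theorem~\ref{maintheorem} for the pair $(M,K)$, and then compare the homotopy limit over $\con(K)$ with the one over $\con(M\setminus\partial M)$. The divergence, and the gap, is entirely in how that last comparison is made.

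Your sub-step B proposes to prove homotopy initiality of an inclusion $\con(K)\hookrightarrow\con(M\setminus\partial M)$, with the codimension-$\geq 3$ hypothesis supplying contractibility of the slices. This does not work. First, there is no honest such inclusion: the two posets are built from different metrics and from genuinely different notions of ``ball'' (stratified balls in $K$ versus Riemannian balls in $M$), so even after shrinking radii the morphism structures do not match. Second, and more seriously, for a generic ambient configuration $(T',\sigma)$ the putative slice in $\con(K)$ is typically empty or badly behaved: there is no reason a scattered finite set in $M\setminus\partial M$ should sit inside a single $V_{M\setminus\partial M}(T,\rho)$ with $T\subset K$ and the required disjointness. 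The codimension hypothesis is a red herring here; in the paper the comparison step uses \emph{no} codimension bound at all, only the nice-neighborhood data.

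What the paper does instead is a Fubini-type argument organized by the nice projection $p:M\to K$. One introduces the double homotopy limit
\[
\holimsub{(T,\rho)\in\con(K)}\;\holimsub{\substack{(T',\sigma)\in\con(M\setminus\partial M)\\ V(T',\sigma)\subset p^{-1}(V_K(T,\rho))}} M\setminus V(T',\sigma)
\]
and compares it on one side with $\holim_{\con(M\setminus\partial M)}\psi$ and on the other with $\holim_{\con(K)}\bigl(\partial M\cup(M\setminus p^{-1}(V_K(T,\rho)))\bigr)$. The first comparison is the substantial one: it is deduced from the \emph{tube lemma} (Lemma~\ref{tubelemma}), which asserts that
\[
\hocolimsub{(T,\rho)\in\con(K)} C_n\bigl(p^{-1}(V_K(T,\rho))\setminus\partial M\bigr)\longrightarrow C_n(M\setminus\partial M)
\]
is a weak equivalence, proved by showing it is a microfibration with contractible fibers. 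Passing to section spaces (continuous homotopy limits) turns this colimit statement into the desired holim comparison. The second comparison uses condition~(2) of Definition~\ref{niceproj}, namely that each $p^{-1}(V_K(T,\rho))\setminus\partial M$ is diffeomorphic to $T\times\mathbb{R}^m$, so the inner holim is already understood. Thus the ``collision'' problem you correctly flag is circumvented not by coherently deforming ambient configurations onto $K$, but by grouping them according to which tube $p^{-1}(V_K(T,\rho))$ they live in.
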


In this case we also have a stronger version with restricted cardinalities (see Theorem \ref{generalization}) and it generalizes one of the main results in \cite{occupants}.
In the absence of the calculus for simplicial complexes as developed in \cite{manifoldcalculusadapted},
there we had to assume the existence of a smooth disk fiber bundle $M\rightarrow L$ with fiber dimension $c\geq 3$ where $L$ is a closed smooth submanifold of $M$. This condition is a special case of our \textit{nice neighborhood} condition here (see Example \ref{weaken}).
%
%
Thus morally, a nice neighborhood should be regarded as a simplexwise fiber bundle $M\rightarrow K$ with nice fibers. \\

In an application we will study the following question: Let $M$ be a smooth manifold with boundary. It is well-known that the boundary $\partial M$ can be recovered as the homotopy link of the basepoint in $M / \partial M \cong (M\setminus\partial M)\cup{\infty}$. Therefore, it is allowed to say that there is an action of the homeomorphism group $\homeo (M\setminus\partial M)$ on the pair $(M,\partial M)$ by homotopy automorphisms, i.e. each homeomorphism of $M\setminus \partial M$ determines a homotopy automorphism of the pair $(M,\partial M)$. But it is also well-known that there is a canonical map of topological grouplike monoids (if an explanantion is needed, see \S \ref{aplli})
\[
\homeo(M\setminus\partial M) \hookrightarrow \haut_{N\Fin}(\con(M\setminus\partial M))  
\]
where $N\Fin$ is the nerve of the category of finite sets and maps between them and $\haut_{N\Fin}(\con(M\setminus\partial M))$ is the space of the homotopy automorphisms of $\con(M\setminus\partial M)$ over $N\Fin$.
In \cite{confcat} Weiss studies the question in what cases the action of $\homeo (M\setminus\partial M)$ on the pair $(M,\partial M)$ by homotopy automorphisms extends to an action of $\haut_{N\Fin}(\con(M\setminus\partial M))$ on the pair $(M,\partial M)$ by homotopy automorphisms. This has also applications in \cite{pontryjagin}. We can generalize his result (see Theorem \ref{apl}): The action can be extended if the condition in Theorem \ref{mainvariation} is satisfied. \\

Our paper with Weiss \cite{occupants} attracted attention in applied topology because of possible relevance in the study of sensor network problems (for an introduction from the topological point of view see \cite{sensors}). The theory developed in this paper should allow the study of more general cases where the domain covered by sensors no longer needs to be a smooth manifold but is allowed to be a simplicial complex. More concretely, let $X$ be a subspace of a euclidean space. Assume we have a collection of points in $X$, each point equipped with a sensor. Each sensor covers a neighborhood of its location, for simplicity a ball of fixed radius. We want to recover the sensor region which is the space covered by the union of all sensors - often equal to $X$. If the location of the points is known, the problem becomes trivial. But if the location is not known (e.g. if the sensors are influenced by environmental factors), topological approaches are needed and the content of this paper, in particular, seems to be relevant to the study of such sensor network problems. \\



\textbf{Outline:} In section \ref{mcafsc} we recall the basic results of manifold calculus adapted for simplicial complexes. Using Goodwillie's homotopy functor calculus, we give general criteria for when a functor is analytic, resp. polynomial, and manifold calculus can be applied.   \\
In section 3 we will introduce the configuration categories of a simplicial complex and a smooth manifold. The configuration category carries a continuous structure. We will take this into account when we define homotopy limits. This leads to the notion of the \textit{continuous homotopy limit}. We prove that in cases important to us it is weakly equivalent to the ordinary (or discrete) homotopy limit. \\
In section 4 we will formulate Theorem \ref{maintheorem} more precisely as well as the stronger version with restricted cardinalities and compare it with the situation in \cite{occupants} where $K$ is replaced by a smooth submanifold. Then we use manifold calculus (adapted for simplicial complexes) to prove it.  \\
In section \ref{boundaryrecovered} we will define a \textit{nice neighborhood} of a simplicial complex embedded in a smooth manifold and explain how this is a generalization of a smooth disk bundle over a smooth manifold. We will prove Theorem \ref{mainvariation} and its stronger version with restricted cardinalities. In section 6 these results will be applied in our study of homotopy automorphisms of the pair $(M,\partial M)$. \\

\textbf{Notation:} The category $(Top)$ is the category of topological spaces. By a simplex $S$ of a simplicial complex, we mean a nondegenerate closed simplex. For such a simplex $S$, we denote by $\op(S)$ the open simplex. For a positive integer $k$, we set $[k]:=\left\{0,1,...,k\right\}$ and $\underline{k} := \left\{1,...,k\right\}$.  \\

\textbf{Acknowledgment:} This paper is a part of the author's PhD thesis under the supervision of Michael Weiss. It is a pleasure to thank him for suggesting this interesting topic and for his unfailing support.

\section{Manifold calculus adapted for simplicial complexes}\label{mcafsc}

In \cite{manifoldcalculusadapted} we develop a generalization of manifold calculus where the smooth manifold is replaced by a simplicial complex. The main results of this paper are applications of this theory. Therefore, we introduce the constructions and main results of \cite{manifoldcalculusadapted} and compare them with the homotopy functor calculus. The comparison leads to criteria which help us to apply manifold calculus (adapted to simplicial complexes).

\subsection{Definitions and main results}
All the constructions and results can be found in \cite{manifoldcalculusadapted}. We define the category $\mathcal{O}=\mathcal{O}(K)$ as follows: The objects are the open subsets of $K$ and the morphisms are inclusions, i.e. for $U,V\in\mathcal{O}$ there is exactly one morphism $U\rightarrow V$ if $U\subset V$ and there are no morphisms otherwise. 
\begin{defi}
Let $U,V\in\mathcal{O}$ be open subsets and let $f_{0},f_{1}: U \rightarrow V$ be two maps such that $f_{i}|_{U\cap S}$ is a smooth embedding from $U\cap S$ into $V\cap S$ for all simplices $S$ of $K$ and $i=0,1$. We call $f_{0}$ and $f_{1}$ \textit{stratified isotopic} if there is a continuous map
$H:U\times \left[ 0,1 \right] \rightarrow V$ such that 
\[
H|_{(U\cap S)\times \left[ 0,1 \right]}: (U\cap S)\times \left[ 0,1 \right]\rightarrow (V\cap S)
\] 
is a smooth isotopy from $f_{0}|_{U\cap S}$ to $f_{1}|_{U\cap S}$ for all simplices $S$ of $K$. In this case we call $H$ a \textit{stratified isotopy} (from $f_{0}$ to $f_{1}$). \\
Note: For an $n$-dimensional simplex $S$, we can regard $U\cap S$ as a subspace in the euclidean space $\R^{n+1}$.
\end{defi}
\begin{defi}\label{isotopyequ}
Let $U,V\in\mathcal{O}$ be two open subsets with $U\subset V$. The inclusion $i: U\rightarrow V$ is a \textit{stratified isotopy equivalence} if there is a map $e: V\rightarrow U$ such that 
$e|_{V\cap S}$ is an embedding from $V\cap S$ into $U\cap S$ for all simplices $S$ of $K$ and $i\circ e$, respectively $e\circ i$, is stratified isotopic to $id_{V}$, 
respectively $id_{U}$.
\end{defi}
\begin{defi}\label{good}
A contravariant functor $F: \mathcal{O}\rightarrow (Top)$ is \textit{good} if
\begin{enumerate}
\item $F$ takes stratified isotopy equivalences to weak homotopy equivalences
\item for every family $\left\{ V_{i} \right\}_{i\in \N}$ of objects in $\mathcal{O}$ with $V_{i}\subset V_{i+1}$ for all $i\in \N$, the following canonical map is a weak homotopy equivalence:
\begin{center}
$F(\cup_{i} V_{i}) \rightarrow \holim_{i}\hspace{1mm} F(V_{i})$
\end{center}
\end{enumerate}
\end{defi}

Recall: For a positive integer $k$, let $\mathcal{P}([k])$ be the power set of $[k]$. Then a functor from $\mathcal{P}([k])$ to topological spaces is a $(k+1)$-cube of spaces. 
\begin{defi}
Let $\chi$ be a cube of spaces. The \textit{total homotopy fiber} of $\chi$ is the homotopy fiber of the canonical map
\begin{align*}
\chi(\emptyset ) \rightarrow \holimsub{\emptyset\neq T\subset [k]} \hspace{0,15cm}\chi (T)
\end{align*}
If this map is a weak homotopy equivalence, we call the cube $\chi$ \textit{(weak homotopy) cartesian}.
\end{defi}
Now we define polynomial functors. To this end, let $F$ be a good functor, let $V\in \mathcal{O}$ be an open subset of $K$, and let $A_{0},A_{1},...,A_{k}$ be pairwise disjoint closed subsets of $V$ (for a positive integer $k$). Define a $k$-cube by
\begin{align}\label{polcube}
T\mapsto F(V\setminus \cup_{i\in T} A_{i})
\end{align}
\begin{defi}\label{polynomial}
The functor $F$ is \textit{polynomial of degree} $\leq k$ if the $k$-cube defined in (\ref{polcube}) is cartesian for all $V\in\mathcal{O}$ and pairwise disjoint closed subsets $A_{0},A_{1},...,A_{k}$ of $V$.
\end{defi}
\textbf{Notation}: 
Let $x\in K$ be given und let $\mathcal{S}_{x}$ be the open star of the open simplex containing $x$, i.e. $\mathcal{S}_{x} :=\cup_{S} \hspace{0,1cm} \op(S)$ where the union ranges over all closed simplices $S$ of $K$ such that $x$ is an element of $S$. 

\begin{defi}\label{specialopenset}
For a positive integer $k$, we define a full subcategory $\mathcal{O}k(K) = \mathcal{O}k$ of $\mathcal{O}$. Its objects are the open subsets $V\subset K$ with the following properties: $V$ has at most $k$ connected components and for each component $V_{0}$ of $V$, there is an $x\in K$ such that $V_{0}\subset \mathcal{S}_{x}$ and the inclusion $V_{0}\rightarrow\mathcal{S}_{x}$ is a stratified isotopy equivalence. An element of $\mathcal{O}k$ (for some $k$) is called a \textit{special open set}.
\end{defi}

\begin{thm}\label{F1F2}
Let $F_{1}\rightarrow F_{2}$ be a natural transformation between two $k$-polynomial functors. If $F_{1}(V)\rightarrow F_{2}(V)$ is a weak equivalence for all $V\in\mathcal{O}k$, it is a weak equivalence for all $V\in\mathcal{O}$. 
\end{thm}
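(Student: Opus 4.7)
The plan is to introduce the class $\mathcal{C}\subset\mathcal{O}$ of open subsets $V$ on which $F_1(V)\to F_2(V)$ is a weak equivalence, so $\mathcal{O}k\subset\mathcal{C}$ by hypothesis, and show $\mathcal{C}=\mathcal{O}$. Goodness condition (2) immediately implies that $\mathcal{C}$ is closed under countable ascending unions. Since $K$ admits a countable base of open stars (in iterated barycentric subdivisions) and these lie in $\mathcal{O}k$, any open $V\subset K$ is an ascending union of finite unions of special open sets. Hence it suffices to show $\mathcal{C}$ contains every finite union of special opens.

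I would then proceed by induction on the number $m$ of special opens in such a union. The base case $m=1$ is immediate. For the inductive step, write $V=U_1\cup\cdots\cup U_m$ with each $U_i\in\mathcal{O}k$ and $m\geq 2$. If $U_m\subset U_1\cup\cdots\cup U_{m-1}$ then the inductive hypothesis suffices, so assume $W:=U_m\setminus(U_1\cup\cdots\cup U_{m-1})$ is nonempty. The strategy is to pick $k+1$ pairwise disjoint closed subsets $A_0,\ldots,A_k\subset W$ and appeal to polynomiality: the $(k+1)$-cubes $T\mapsto F_j(V\setminus\bigcup_{i\in T}A_i)$ are cartesian for $j=1,2$. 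For nonempty $T$, one has $V\setminus\bigcup_{i\in T}A_i = U_1\cup\cdots\cup U_{m-1}\cup (U_m\setminus\bigcup_{i\in T}A_i)$; if the $A_i$ are chosen so that the right-hand side is (up to stratified isotopy equivalence, which preserves $F_j$-values by goodness condition (1)) a union of at most $m-1$ special opens, the inductive hypothesis applies on every non-initial vertex of the cube, and cartesianness forces $V\in\mathcal{C}$.

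The main obstacle is the geometric construction of the $A_i$. In the classical smooth-manifold version of manifold calculus this is handled by a handle-decomposition argument, but here $K$ is a stratified space and the $U_i$ are special only up to stratified isotopy equivalence as in Definition \ref{isotopyequ}. My intended construction takes each $A_i$ to be a small closed star (in a sufficiently fine subdivision of $K$) sitting inside $W$, arranged so that $U_m\setminus\bigcup_{i\in T}A_i$ is stratified isotopy equivalent to an open subset already covered by $U_1\cup\cdots\cup U_{m-1}$. Carrying out this construction uniformly across simplices of varying dimensions, and verifying the required stratified isotopy equivalences, is the delicate combinatorial step that distinguishes the simplicial case from the smooth-manifold case and is the technical heart of the argument.
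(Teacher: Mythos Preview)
The paper does not actually prove this theorem; it is quoted from the companion paper \cite{manifoldcalculusadapted}, so there is no in-paper proof to compare against. That said, your overall architecture (pass to ascending unions via goodness, then induct using the cartesian $(k+1)$-cubes supplied by polynomiality) is the standard one and is correct in outline.

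The gap is in your choice of induction variable and the associated geometric step. Inducting on the number $m$ of special opens in a cover does not close up: deleting your $A_i$'s can \emph{increase} the number of special opens needed. Take $K$ a circle, $k=1$, $V$ the whole circle covered by two arcs $U_1,U_2$, and $A_0,A_1$ two small closed arcs in $W=U_2\setminus U_1$. Then $V\setminus A_0$ and $V\setminus A_1$ are single arcs (fine), but $V\setminus(A_0\cup A_1)$ has two components and is \emph{not} a union of $\le m-1=1$ special opens, so your inductive hypothesis does not apply to that vertex of the cube. More generally, ``small closed stars'' in $W$ cannot force $U_m\setminus\bigcup_{i\in T}A_i$ to retract into $U_1\cup\cdots\cup U_{m-1}$; removing a small piece from a star-shaped set typically produces something with more, not fewer, components.

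The argument that does work replaces your induction on $m$ by an induction on a handle count (in the sense of the relative handle index for simplicial complexes introduced in \cite{manifoldcalculusadapted} and recalled just before Definition~\ref{nalytic}). One writes $V$ as an ascending union of interiors of compact codimension-zero subobjects, and for each such subobject inducts on the number of handles. When a top handle is attached, the $A_i$ are taken to be $k+1$ parallel copies of its cocore; deleting any nonempty subcollection of these is then stratified-isotopy equivalent to the subobject \emph{before} the handle was attached, so the handle count genuinely drops on every non-initial vertex of the cube. The base of this induction lands in $\mathcal{O}k$. Your sketch has the right cube, but the wrong $A_i$'s and the wrong induction parameter.
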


Let $F: \mathcal{O} \rightarrow (Top)$ be a good functor. There is a concept of (relative) handle index in a simplicial complex \cite[\S 3.1]{manifoldcalculusadapted}. We can use it to define analyticity for $F$. 
To this end, let $P$ be a compact codimension zero subobject of $K$ and let $\rho$ be a fixed integer. Suppose $A_{0},A_{1},...,A_{r}$ are pairwise disjoint compact codimension zero subobjects of $K\setminus\text{int}(P)$ with relative handle index $q_{A_{i}}\leq \rho$ (relative to $P$). For $T\subset [r]$, we set $A_{T}:= \cup_{i\in T} A_{i}$ and assume $r\geq 1$.
\begin{defi}\label{nalytic}
The functor $F$ is called \textit{$\rho$-analytic} \textit{with excess $c$} if, in these circumstances, the cube 
\begin{align*}
T\mapsto F(\text{int} \left(P \cup  A_{T})\right) \hspace{0,1cm},\hspace{0,3cm} T\subset [r]
\end{align*}
is $c + \sum_{i=0}^{r} (\rho - q_{A_{i}})$-cartesian for some integer $c$.
\end{defi}

\begin{thm}\cite[3.6]{manifoldcalculusadapted}\label{analytic}
Let $F$ be a $\rho$-analytic functor with excess $c$ and let $V\in\mathcal{O}$ be an open subset. Then the map
\begin{align*}
\eta_{k-1}(V):F(V)\rightarrow T_{k-1}F(V)
\end{align*}
is $(c+k(\rho - \text{dim}(K)))$-connected for every $k>1$.
\end{thm}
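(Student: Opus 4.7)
The plan is to adapt the standard convergence argument for analytic functors from ordinary manifold calculus (due to Goodwillie and Weiss) to the simplicial complex setting. The first reduction is from arbitrary open subsets to compact pieces: since $F$ is a good functor, condition (2) of Definition \ref{good} implies that $F$ carries increasing unions to homotopy inverse limits, and $T_{k-1}F$ has the analogous property by its construction as a homotopy limit over $\mathcal{O}k$. Hence it is enough to prove the connectivity estimate for $V=\text{int}(P)$ with $P$ a compact codimension zero subobject of $K$, and then pass to the limit over a handle-by-handle exhaustion.

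For such $V$ I would argue by induction on the complexity of $P$ in a handle decomposition as developed in \cite{manifoldcalculusadapted}, §3.1. The base case is when $V$ lies in $\mathcal{O}k(K)$: here $V$ itself is a maximal element in the diagram indexing $T_{k-1}F(V)$, so $\eta_{k-1}(V)$ is a weak equivalence and the desired connectivity is automatic. For the inductive step, consider a handle $A$ of index $q_A\leq\dim(K)$ joining a smaller codimension zero piece $P'$ to $P$. One selects $k-1$ additional pairwise disjoint compact codimension zero subobjects $A_1,\dots,A_{k-1}$ of relative handle index at most $\dim(K)$, sets $A_0:=A$, and forms the $k$-cube
\begin{equation*}
T\longmapsto F(\text{int}(P'\cup A_T)),\qquad T\subset[k-1].
\end{equation*}
By Definition \ref{nalytic}, this cube is $\bigl(c+\sum_{i=0}^{k-1}(\rho-q_{A_i})\bigr)$-cartesian, and since each $q_{A_i}\leq\dim(K)$ this is bounded below by $c+k(\rho-\dim(K))$. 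On the other hand, the corresponding cube $T\mapsto T_{k-1}F(\text{int}(P'\cup A_T))$ is cartesian in the strict sense, because $T_{k-1}F$ is polynomial of degree $\leq k-1$ and the $A_i$ are pairwise disjoint closed subsets. Comparing the two cubes vertex by vertex through $\eta_{k-1}$ and taking total homotopy fibers, one combines the cube estimates with the inductive hypothesis on the remaining vertices to extract the asserted connectivity of $\eta_{k-1}(V)$.

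The principal obstacle is orchestrating the induction so that the vertices to which the hypothesis is applied are genuinely simpler than the vertex of interest, and so that the auxiliary handles $A_1,\dots,A_{k-1}$ can be chosen with the required disjointness and index bounds inside the ambient complex. This is a delicate bookkeeping issue, familiar from the original Goodwillie--Weiss arguments: one typically has to refine the handle decomposition, or reorganize the induction in terms of a carefully chosen partial order on handle presentations, in order to accommodate the auxiliary handles at every step. Once these combinatorial subtleties are resolved, the connectivity gain of $\rho-\dim(K)$ per handle compounds with the $k$-fold structure of the cube to yield the stated bound $c+k(\rho-\dim(K))$.
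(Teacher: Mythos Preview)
The paper does not prove this theorem: it is quoted verbatim from \cite[Theorem~3.6]{manifoldcalculusadapted} and no argument is given here, so there is no in-paper proof to compare your proposal against. Your sketch is the standard Goodwillie--Weiss convergence argument transported to the simplicial setting, and is almost certainly the shape of the proof in the cited reference.

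One point worth sharpening in your inductive step: the auxiliary pieces $A_1,\dots,A_{k-1}$ are not arbitrary extra handles lying around in the complex. The usual mechanism is to take the single new handle $A_0$ of index $q_{A_0}$ and subdivide (or ``parallelize'') it into $k$ pairwise disjoint sub-handles $A_0,\dots,A_{k-1}$, each of the same index $q_{A_0}$, so that attaching any nonempty subcollection to $P'$ gives something isotopy equivalent to $P'\cup A_0$. This is what makes the induction close: every proper vertex of the cube is either $\text{int}(P')$ or isotopy equivalent to $\text{int}(P'\cup A_0)$, so the inductive hypothesis applies to all of them except the initial vertex, and the cube estimates then force the connectivity at that vertex. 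Your description leaves the provenance of the $A_i$ vague and bounds their indices only by $\dim(K)$ rather than by $q_{A_0}$; with the parallelization trick the bookkeeping you flag as the principal obstacle becomes routine.
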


\begin{rmk}
Theorem \ref{analytic} is weaker than \cite[Theorem 3.6]{manifoldcalculusadapted} which uses the homotopy dimension of $V$ \cite[Def. 3.4]{manifoldcalculusadapted} in order to increase the connectivity. For our purposes we do not need this stronger version.
\end{rmk}

\begin{cor}
Let $F$ be a $\rho$-analytic functor with $\rho > \text{dim}(K)$. For all open sets $V\in\mathcal{O}(K)$, the canonical map
\begin{align*}
F(V)\rightarrow T_{\infty}F(V) = \holim_{k} T_{k}F(V)
\end{align*}
is a weak equivalence.
\end{cor}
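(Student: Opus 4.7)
The plan is to deduce the corollary directly from Theorem \ref{analytic} by letting the degree of approximation $k$ tend to infinity and invoking a Milnor-type argument for homotopy limits of towers.

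First I would note that the canonical map $F(V)\to T_\infty F(V)=\holim_k T_k F(V)$ exists because the maps $\eta_{k}(V)\co F(V)\to T_k F(V)$ are compatible as $k$ varies, and so assemble into a map from $F(V)$ to the inverse limit tower. Combining this with Theorem \ref{analytic}, we know that for every $k\geq 1$ the map
\[
\eta_{k-1}(V)\co F(V)\longrightarrow T_{k-1}F(V)
\]
is $(c+k(\rho-\dim(K)))$-connected. Since by hypothesis $\rho-\dim(K)\geq 1$, the connectivity of $\eta_{k-1}(V)$ tends to infinity as $k\to\infty$.

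Next I would apply the standard Milnor short exact sequence for the homotopy groups of a homotopy limit of a tower of spaces. For every basepoint and every $n\geq 0$ there is a natural exact sequence
\[
\ast\longrightarrow {\limsub{k}}^{1}\,\pi_{n+1}(T_k F(V))\longrightarrow \pi_n(\holimsub{k}T_k F(V))\longrightarrow \limsub{k}\pi_n(T_k F(V))\longrightarrow \ast.
\]
For $k$ sufficiently large (depending on $n$), the maps $\eta_{k-1}(V)$ are $(n+1)$-connected, hence the induced maps $\pi_{n+1}(F(V))\to\pi_{n+1}(T_{k-1}F(V))$ are surjective and $\pi_n(F(V))\to\pi_n(T_{k-1}F(V))$ are bijective. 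In particular the tower $\{\pi_{n+1}(T_kF(V))\}_k$ satisfies the Mittag-Leffler condition (its image system is eventually constant, identified with $\pi_{n+1}(F(V))$), so $\lim^1$ vanishes; and $\lim_k\pi_n(T_k F(V))\cong \pi_n(F(V))$.

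Assembling these two ingredients, the Milnor sequence yields $\pi_n(\holim_k T_k F(V))\cong \pi_n(F(V))$, and the isomorphism is induced by the canonical map of the corollary. This holds for every $n$ and every choice of basepoint, so $F(V)\to T_\infty F(V)$ is a weak equivalence. There is no real obstacle here; the only point requiring minor care is the $\lim^1$ vanishing, which is immediate from the unbounded growth of connectivity forced by the inequality $\rho>\dim(K)$.
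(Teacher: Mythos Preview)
Your argument is correct and is precisely the standard deduction that the paper leaves implicit: the corollary is stated without proof, as an immediate consequence of Theorem~\ref{analytic}, and your Milnor sequence/Mittag-Leffler argument is the expected way to spell this out. There is nothing to add.
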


\subsection{Comparison with homotopy functor calculus}

In the last section we introduced a version of manifold calculus for simplicial complexes. We saw that in order to apply the approximation theorem (\ref{analytic}), we need to assume analyticity of the functor. Therefore, we should look for criteria which implies that a functor is analytic. Surprisingly, homotopy functor calculus introduced by Goodwillie \cite{goodwillie2} helps to find such criteria. \\
Functor calculus investigates (covariant) \textit{homotopy functors} from topological spaces to itself. A functor $G:(Top)\rightarrow (Top)$ is called homotopy functor if it takes weak equivalences to weak equivalences. If $G$ is such a functor, we can compose it with a contravariant functor $F$ from $\mathcal{O}(K)$ to $(Top)$. The composition $G\circ F$ is a contravariant functor from $\mathcal{O}(K)$ to $(Top)$. We will examine this composition. 

\begin{defi}
A cube of spaces is called \textit{strongly cocartesian} if each sub 2-face is a homotopy pushout. 
\end{defi}

\begin{defi}\label{kpolf}
A homotopy functor $G$ from $(Top)$ to itself is called \textit{polynomial of degree $\leq k$} if it takes any strongly cocartesian $(k+1)$-cube to a weakly cartesian $(k+1)$-cube.
\end{defi}

Let $V\in \mathcal{O}(K)$ be an open subset of $K$, let $A_{0},A_{1},...,A_{k}$ be pairwise disjoint closed subsets of $V$ (for a positive integer $k$) and let $A_{T}:=\cup_{i\in T}A_{i}$ where $T$ is a subset of $[k]$. The following proposition is an easy observation.

\begin{prop}
Let $F: \mathcal{O}(K)\rightarrow (Top)$ be a good (contravariant) functor (Def. \ref{good}) such that
\begin{equation*}
\begin{gathered}
\xymatrix{
F(V\setminus A_{ T\cap T^{'}}) \ar[d] \ar[r] & F(V\setminus A_{T}) \ar[d] \\
F(V\setminus A_{T^{'}}) \ar[r] & F(V\setminus A_{T\cup T^{'}})
}
\end{gathered}
\end{equation*}
is a homotopy pushout for all $T,T^{'}\subset [k]$ and all choices of $V, A_{0},...,A_{k}$ as above
and let $G:(Top)\rightarrow (Top)$ be a (covariant) homotopy functor. We suppose that $G$ is $k$-polynomial in the sense of homotopy functor calculus (Def. \ref{kpolf}). Then the composition $G\circ F$ is $k$-polynomial in the sense of manifold calculus (adapted for simplicial complexes).
\end{prop}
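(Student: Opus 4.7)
The plan is to unpack the two polynomial conditions and observe that they match via the notion of a strongly cocartesian cube. Fix $V\in \mathcal{O}(K)$ together with pairwise disjoint closed subsets $A_{0},\ldots,A_{k}$ of $V$, and consider the $(k+1)$-cube
\[
\chi\co T\longmapsto F(V\setminus A_{T}),\qquad T\subset [k].
\]
This is covariant in $T$: if $T\subset T'$ then $A_{T}\subset A_{T'}$ and $V\setminus A_{T}\supset V\setminus A_{T'}$, which the contravariant $F$ sends to a map in the required direction. What needs to be checked in order to apply the hypothesis on $G$ is that $\chi$ is strongly cocartesian.

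First I would identify the 2-faces of $\chi$. Each 2-face is determined by a subset $S\subset [k]$ and two distinct indices $i,j\in [k]\setminus S$, and has vertices indexed by $S,\ S\cup\{i\},\ S\cup\{j\},\ S\cup\{i,j\}$. Setting $T:=S\cup\{i\}$ and $T':=S\cup\{j\}$ gives $T\cap T'=S$ and $T\cup T'=S\cup\{i,j\}$, so this 2-face is exactly the square
\[
\xymatrix{
F(V\setminus A_{T\cap T'}) \ar[d] \ar[r] & F(V\setminus A_{T}) \ar[d] \\
F(V\setminus A_{T'}) \ar[r] & F(V\setminus A_{T\cup T'})
}
\]
appearing in the hypothesis, and hence a homotopy pushout. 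Every 2-face of $\chi$ being a homotopy pushout is precisely the definition of $\chi$ being strongly cocartesian. Now applying the assumption that $G$ is $k$-polynomial in the sense of Definition~\ref{kpolf}, we obtain that
\[
G\circ\chi\co T\longmapsto G\bigl(F(V\setminus A_{T})\bigr),\qquad T\subset[k],
\]
is a weakly cartesian $(k+1)$-cube. Since $V$ and the subsets $A_{0},\ldots,A_{k}$ were arbitrary, this is exactly the condition in Definition~\ref{polynomial} for $G\circ F$ to be $k$-polynomial in the sense of manifold calculus adapted to simplicial complexes.

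There is no serious obstacle here; the content is the matching of two cube-based definitions. The only care required is bookkeeping: tracking the variance so that the covariant cube $\chi$ indexed by $\mathcal{P}([k])$ is built from the contravariant functor $F$ via $T\mapsto V\setminus A_{T}$, and reconciling the indexing convention $[k]=\{0,\ldots,k\}$ which produces a $(k+1)$-cube on both sides. If one additionally wants $G\circ F$ to qualify as a \emph{good} functor in the sense of Definition~\ref{good}, the stratified isotopy equivalence axiom is immediate from $F$ being good together with $G$ preserving weak equivalences; this, however, is not needed for the cube-cartesianness statement above.
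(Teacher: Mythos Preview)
Your proof is correct and is precisely the argument the paper has in mind: the paper does not write out a proof at all, merely calling the proposition ``an easy observation.'' Your unpacking---identifying each 2-face of the cube $T\mapsto F(V\setminus A_T)$ with one of the hypothesized homotopy pushout squares, concluding strong cocartesianness, and then invoking $k$-polynomiality of $G$---is exactly that observation made explicit.
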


We would like to have a similar statement for analyticity.

\begin{defi}
Let $\rho$ be an integer and let $\chi$ be a cocartesian $k$-cube of spaces such that the maps $\chi (\emptyset) \rightarrow \chi (\left\{i\right\})$ are $k_{i}$-connected with $k_{i}>\rho$ for all $i\in [k]$.
A homotopy functor $G$ is called \textit{$\rho$-analytic} with excess $c$ if the cube $G\circ \chi$ is $(c + \sum_{i\in [k]} (k_{i}-\rho))$-cartesian (for all choices of $\chi$).
\end{defi}

\begin{ex}\label{exid}
According to the Blakers-Massey theorem \cite{blakersmassey}, for any strongly cocartesian cube $\chi$ where the map $\chi(\emptyset)\rightarrow \chi(\left\{i\right\})$ is $\kappa_{i}$-connected for each $i\in [k]$, the cube $\chi$ is $\kappa$-cartesian with $\kappa =1+\sum_{i\in [k]} (\kappa_{i}-1)$.
Therefore by definition, the identity functor $\text{id}:(Top)\rightarrow (Top)$ is $1$-analytic with excess $1$. 
\end{ex}

Let $F: \mathcal{O}(K) \rightarrow (Top)$ be a good functor (Def. \ref{good}). Recall that there is a concept of relative handle index in a simplicial complex \cite[\S 3.1]{manifoldcalculusadapted}. 
Let $P$ be a compact codimension zero subobject of $K$ and let $\rho$ be a fixed integer. Suppose $A_{0},A_{1},...,A_{r}$ are pairwise disjoint compact codimension zero subobjects of $K\setminus\text{int}(P)$ with relative handle index $q_{A_{i}}\leq \rho$ (relative to $P$). For $T\subset [k]$, we set $A_{T}:= \cup_{i\in T} A_{i}$ and assume $k\geq 1$.

\begin{prop}\label{vergleichmcfc}
Suppose that the cube 
\begin{align*}
T\mapsto F(\text{int} \left(P \cup  A_{T})\right) \hspace{0,1cm},\hspace{0,3cm} T\subset [k]
\end{align*}
is strongly cocartesian and suppose that there is a positive integer $\delta$ such that the maps
\[
F(\text{int} \left(P \cup  A_{[k]})\right) \rightarrow F(\text{int} \left(P \cup  A_{[k]\setminus \left\{i\right\}})\right)
\] 
are $(\delta-q_{A_{i}})$-connected. Then $F$ is $(\delta-1)$-analytic with excess $1$ (in the sense of Def. \ref{nalytic}).
\end{prop}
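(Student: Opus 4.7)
The plan is to derive Proposition \ref{vergleichmcfc} as a direct application of the Blakers-Massey theorem recorded in Example \ref{exid}: a strongly cocartesian $(k+1)$-cube $\chi$ in which the edge $\chi(\emptyset)\to\chi(\{i\})$ is $\kappa_i$-connected is $\bigl(1+\sum_{i=0}^{k}(\kappa_i-1)\bigr)$-cartesian. The strategy is simply to match the hypotheses and numerical conclusions with those of Definition \ref{nalytic}.

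First I would unpack what the target says. Asking that $F$ be $(\delta-1)$-analytic with excess $1$ amounts, by Definition \ref{nalytic}, to showing that the cube $T\mapsto F(\text{int}(P\cup A_T))$ is $N$-cartesian with
\[
N \;=\; 1+\sum_{i=0}^{k}\bigl((\delta-1)-q_{A_i}\bigr) \;=\; 1+\sum_{i=0}^{k}\bigl((\delta-q_{A_i})-1\bigr).
\]
Thus it suffices to verify the Blakers-Massey hypotheses with edge-connectivities $\kappa_i=\delta-q_{A_i}$; the B-M conclusion then numerically matches the required bound on the nose.

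Strong cocartesianness is given. The connectivity input, however, is supplied for the ``maximal'' edges $F(\text{int}(P\cup A_{[k]}))\to F(\text{int}(P\cup A_{[k]\setminus\{i\}}))$, while Example \ref{exid} is phrased in terms of the ``minimal'' edges $\chi(\emptyset)\to\chi(\{i\})$. Here I would invoke the standard fact that in a homotopy pushout square parallel maps have the same connectivity; iterating through the $2$-faces of a strongly cocartesian cube, every edge parallel to a fixed direction $i$ has the same connectivity, namely $\delta-q_{A_i}$. Thus the cube meets the Blakers-Massey hypotheses, and the theorem delivers cartesianness at precisely the connectivity $N$ computed above.

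The one piece of bookkeeping that is easy to slip on is the orientation of the cube: because $F$ is contravariant while $T\mapsto \text{int}(P\cup A_T)$ is inclusion-increasing, the arrows of the $F$-cube run from larger to smaller $T$. To align conventions with Example \ref{exid} one reindexes via $T\mapsto [k]\setminus T$; after this reindexing, the maps highlighted in the hypothesis become precisely the ``bottom'' edges $\tilde\chi(\emptyset)\to\tilde\chi(\{i\})$ of the reoriented cube. Once this is in place, the proof is a single invocation of Blakers-Massey, and the main obstacle is essentially just this careful orientation bookkeeping together with the parallel-edges observation.
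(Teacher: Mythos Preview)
Your proposal is correct and follows essentially the same approach as the paper: apply the Blakers--Massey theorem (Example \ref{exid}) to the strongly cocartesian cube and read off the numerical bound $1+\sum_{i}(\delta-q_{A_i}-1)$, which is exactly the $(\delta-1)$-analyticity with excess $1$. Your treatment is in fact more careful than the paper's, which leaves the reindexing $T\mapsto [k]\setminus T$ and the parallel-edges observation implicit.
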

\begin{proof}
The idea is to apply the Blakers-Massey theorem. By assumption, the cube $T \mapsto F(\text{int} \left(P \cup  A_{T})\right)$ is strongly cocartesian. We consider the cube 
\begin{align*}
T\mapsto \text{id} \circ F(\text{int} \left(P \cup  A_{T})\right) \hspace{0,1cm},\hspace{0,3cm} T\subset [k]
\end{align*}
By applying example \ref{exid}, we deduce that the cube is $1 + \sum_{i\in [k]} (\delta - q_{A_{i}} -1)$-cartesian. 
\end{proof}

\begin{rmk}
In the last proposition we use the analyticity of the identity map in topological spaces to find a criteria for analyticity of $F$ where $F$ is a good functor. More generally, the following statement holds: For a $\rho$-analytic functor $G: (Top) \rightarrow (Top)$ with excess $c$ and $F$ as above, the composition $G\circ F$ is a $(\delta-\rho)$-analytic functor with excess $c$ and where $\delta$ is as above. \\
For an additional short note on the relationship of manifold calculus (for smooth manifolds) and homotopy functor calculus, see \cite[Remark 1.3.2]{occupants}.
\end{rmk}

\section{Background}

In this section we provide some background which we will need for the discussions in the next sections. We introduce the configuration category of a simplicial complex and the continuous homotopy limit.

\subsection{Configuration category of a simplicial complex}\label{confcat}

We will need the configuration category of a manifold as well as the configuration category of a simplicial complex. First, we recall the Riemannian model of the configuration category of a smooth manifold. Note that there are several equivalent definitions of the configuration category of a manifold \cite{pedromichael}. \\
Let $M$ be a smooth manifold without boundary of dimension $m$ and suppose that we fixed a Riemannian metric on $M$. Then the configuration category $\con(M)$ of $M$ is a topological poset. The objects are pairs $(T,\rho)$ where $T$ is a finite subset of $M$ and $\rho : T\rightarrow (0,\infty)$ is a function such that
  \begin{enumerate}
     \item for each $t\in T$, the exponential map $\text{exp}_{t}$ is defined and regular on the compact disk of radius $\rho (t)$ about the origin in the tangent space $T_{t}M$.
		 \item the images in $M$ of these disks under the exponential maps $\text{exp}_{t}$ are pairwise disjoint.
  \end{enumerate}
For such a pair $(T,\rho)$, let $V_{M}(T,\rho)\subset M$ be the union of the open balls of radius $\rho(t)$ about $t\in T$. Then $V_{M}(T,\rho)$ is an open subset of $M$ which is diffeomorphic to $T\times \R^{m}$. All these pairs form a topological poset $\con(M)$ by 
\[
(T,\rho) \leq (T',\rho ') \hspace{0,5cm} : \Longleftrightarrow \hspace{0,5cm} V_{M}(T,\rho) \subset V_{M}(T',\rho ')
\]
This poset can also be regarded as a category. 
We would like to adapt this definition and introduce the configuration category $\con (K)$ of the simplicial complex $K$. Therefore, we should start with the following observation.
\begin{rmk}
Let $x$ be an element of $K$ and let $\mathcal{S}_{x}$ be the open star neighborhood of $x$ in $K$. The closure $K_{x} := \text{cl}(\mathcal{S}_{x})$ of $\mathcal{S}_{x}$ in $K$ carries a canonical metric $d = d_{x}$ induced by the euclidean structure of each simplex. The precise definition is technical and can be done by distinguishing the following two cases: If two elements $y,y' \in K_{x}$ are in the same simplex, we can use the euclidean structure of the simplex to define $d(y,y')\in [0,\infty )$ as the distance of $y$ and $y'$ in the euclidean space. If they are not in the same simplex, we set
\[
d(y,y') := \minsub{z\in S_{y}\cap S_{y'}} d(y,z) + d(z,y')
\]
where $S_{y}$, resp. $S_{y'}$, is the simplex of maximal dimension which includes $y$, resp. $y'$. By definition, we can use again the euclidean structure. \\
Note: We wrote $d$ instead of $d_{x}$ to avoid the index $x$. In fact, $d(y,y')$ is independent of the element $x$ in $K$: If $x,x'$ are two elements of $K$ with $y,y'\in \mathcal{S}_{x} \cap\mathcal{S}_{x'}$, then $d_{x}(y,y') = d_{x'}(y,y')$.
\end{rmk}

Now we introduce the configuration category $\con(K)$. The objects are again pairs $(T,\rho)$ where $T$ is a finite subset of $K$ and $\rho : T\rightarrow (0,\infty)$ is a function fulfilling the following two conditions.
  \begin{enumerate}
	   \item For each $t\in T$, there is an element $x\in K$ such that $t\in \mathcal{S}_{x}$ and the open ball $B^{d}_{\rho (t)}(t) \subset K_{x} = \text{cl}(\mathcal{S}_{x})$ of radius $\rho (t)$ about $t$ determined by the metric $d = d_{x}$ is a subset of the open star neighborhood $\mathcal{S}_{x}$ and the inclusion $B^{d}_{\rho (t)}(t) \hookrightarrow\mathcal{S}_{x}$ is a stratified isotopy equivalence (Def. \ref{isotopyequ}). In particular $B^{d}_{\rho (t)}(t)\in \mathcal{O}1$ is a special open set (see Def. \ref{specialopenset}).
     \item The open balls $B^{d}_{\rho (t)}(t)\subset K$ with origin $t$ and radius $\rho (t)$ are pairwise disjoint.
  \end{enumerate}
For such a pair $(T,\rho)$, let $V_{K}(T,\rho)\subset M$ be the union of the open balls $B^{d}_{\rho (t)}(t)\subset K$ of radius $\rho(t)$ about $t\in T$. Then $V_{K}(T,\rho)$ is a special open subset of $K$ (Def. \ref{specialopenset}). By analogy with the manifold case, we form the topological poset $\con(K)$ by 
\[
(T,\rho) \leq (T',\rho ') \hspace{0,5cm} : \Longleftrightarrow \hspace{0,5cm} V_{K}(T,\rho) \subset V_{K}(T',\rho ')
\]
This poset can also be regarded as a category. \\
Now we want to take a closer look at the configuration category $\con(K)$. But note that the following results are also true for $\con(M)$ - the configuration category of a smooth manifold $M$ (without boundary).

\begin{rmk}\label{topcat}
The configuration category $\con(K)$ is a topological poset, i.e. each, the objects and the morphisms, form a topological space. More generally, if $N(\con (K))$ is the nerve of the category $\con(K)$, then $N_{r}(\con(K))$ is a topological space for all $r\geq 0$. This is obvious since $N_{r}(\con(K))$ is the space of all strings
\[
(T_{0},\rho_{0}) \leq (T_{1},\rho_{1}) \leq ... \leq (T_{r},\rho_{r})
\]
where $(T_{i},\rho_{i})$, $0\leq i\leq r$, is an element of $\con(K)$.
\end{rmk}

Now we want to investigate the homotopy type of the configuration category $\con(K)$ as topological space. It is very reminiscent of the configuration spaces.

\begin{defi}
We define $C_{r}(K)$ to be the space of unordered configurations of $r$ points in $K$: Let $F_{r}(K)$ be the space of ordered $r$-configurations of $K$ given by
\[
F_{r}(K) := \left\{ (x_{1},...,x_{r})\in K^{r} \mid x_{i}\neq x_{j} \hspace{0,2cm}\text{for all}\hspace{0,2cm} i\neq j \right\} 
\]
The symmetric group $\Sigma_{r}$ acts freely on $F_{r}(K)$. Then 
\[
C_{r}(K) := F_{r}(K) / \Sigma_{r}
\]
is the space of unordered $r$-configurations.
\end{defi}

\begin{rmk}
What is the relation between the configuration category and the configuration spaces? Let $r\geq 0$ be a fixed integer. We define the space $C_{r}^{\text{fat}}(K)$ to be the space of all pairs $(T,\rho)\in\con(K)$ with $\left|T\right| = r$. Then we have a forgetful projection map 
\[
C_{r}^{\text{fat}}(K) \rightarrow C_{r}(K)
\]
which is a fiberbundle with contractible fibers. Therefore, this map is a weak equivalence of spaces.
\end{rmk}

\subsection{Continuous homotopy limit}\label{conthomotlim}

Let $\con(K)$ be the configuration category of $K$ and let $N(\con (K))$ be its nerve. We saw that $N_{r}(\con(K))$ is a topological space for all $r\geq 0$. We are studying the functor $\Phi$ from $\con(K)$ to topological spaces defined by
\[
\Phi((T,\rho)) := M\setminus V_{K}(T,\rho)
\]
and its homotopy limit 
\[
\holimsub{\con(K)} \Phi \hspace{0,15cm} = \hspace{0,05cm} \holimsub{(T,\rho)\in\con(K)} M\setminus V_{K}(T,\rho)
\]
During our study of this homotopy limit, we would like to integrate the continuous structure of the nerve of $\con(K)$. To this end, we will introduce the continuous homotopy limit of $\Phi$ using the topological structure of the configuration category. \\

We recall that the ordinary (or discrete) homotopy limit $\holim_{\con(K)} \Phi $ of the contravariant functor $\Phi$ is defined to be the totalization of the cosimplicial space
\[
\left[r\right] \mapsto \prod_{(T_{0},\rho_{0})\leq ... \leq(T_{r},\rho_{r})\in N_{r}(\con(K))} \Phi((T_{r},\rho_{r}))
\]
By definition, the target is equal to the space of all (not necessarily continuous) sections from $N_{r}(\con(K))$ to 
\[
\coprod_{(T_{0},\rho_{0}) \leq ... \leq (T_{r},\rho_{r})\in N_{r}(\con(K))} \Phi((T_{r},\rho_{r}))
\]
Equivalently, the target is equal to the space of all (not necessarily continuous) maps $f : N_{r}(\con(K)) \rightarrow M$ such that 
\[
f((T_{0},\rho_{0}) \leq ... \leq (T_{r},\rho_{r})) \in M\setminus V_{K}(T_{r},\rho_{r})
\]
Using the continuous structure of $\con(K)$, we introduce the following notation.

\begin{defi}
We define $\Gamma_{r}(\Phi)$ as the space of all continuous maps $f : N_{r}\con(K)\rightarrow M$ such that $f((T_{0},\rho_{0})\leq...\leq(T_{r},\rho_{r})) \in M\setminus V_{K}(T_{r},\rho_{r})$. 
\end{defi}

Note: If we define $E^{!}_{r}(\Phi)$ to be the space 
\[
\coprod_{(T_{0},\rho_{0})\leq...\leq(T_{r},\rho_{r})\in N_{r}(\con(K))} \Phi((T_{r},\rho_{r})) 
\]
with the subspace topology of $N_{r}(\con(K)) \times M$, then the projection map $E^{!}_{r}(\Phi)\rightarrow N_{r}(\con(K))$ is a fiber bundle and $\Gamma_{r}(\Phi)$ is the space of all continuous sections of this fiber bundle.

\begin{defi}
The \textit{continuous homotopy limit} $\text{ctsholim}_{\con(K)}\Phi$ of $\Phi$ is defined to be the totalization of the cosimplicial space $\left[ r \right] \mapsto \Gamma_{r}(\Phi)$.
\end{defi}

\begin{lem}\label{holimcomp}
The canonical inclusion map
\[
\ctsholimsub{\con(K)}\Phi \rightarrow \holimsub{\con(K)}\Phi
\]
is a weak equivalence.
\end{lem}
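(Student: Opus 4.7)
The statement compares totalizations of two cosimplicial spaces built from the same pointwise values $\Phi(s)=M\setminus V_K(s)$: the continuous model $Y^r=\Gamma_r(\Phi)$ and the discrete model $X^r=\prod_{s\in N_r(\con(K))}\Phi(s)$. The canonical inclusion $Y^\bullet\hookrightarrow X^\bullet$ forgets the continuity of a section. I would compare them as Reedy fibrant cosimplicial objects.

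\textbf{Step 1 (Reedy fibrancy).} First I would verify that $X^\bullet$ and $Y^\bullet$ are both Reedy fibrant so that their totalizations compute the homotopy limits in question. Fibrancy of $X^\bullet$ is pointwise, using fibrancy of each $\Phi(s)$. Fibrancy of $Y^\bullet$ uses the fiber bundle structure $E^!_r(\Phi)\to N_r(\con(K))$ recalled in the paper: restriction of sections along inclusions of subcomplexes of the nerve yields Serre fibrations between section spaces.

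\textbf{Step 2 (Local constancy of $\Phi$).} The key homotopical input is that $\Phi$ is a good functor (Definition \ref{good}), so it sends stratified isotopy equivalences to weak equivalences. Any morphism in $\con(K)$ obtained by continuously shrinking the radii $\rho$ with fixed underlying configuration $T$ corresponds to an inclusion $V_K(T,\rho)\subset V_K(T,\rho')$ of special open subsets which is a stratified isotopy equivalence, and hence induces a weak equivalence on $\Phi$. Therefore $\Phi$ is homotopically locally constant along the radius direction of $N_r(\con(K))$, which is the principal source of non-discrete topology on the nerve.

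\textbf{Step 3 (Comparison via Reedy induction).} Using Step 2, I would compare the two totalizations by a Reedy skeletal induction on cosimplicial degree. At each stage the cosimplicial compatibility conditions in $\Tot X^\bullet$, combined with the local constancy of $\Phi$, force a set-theoretic system of sections to be homotopic to a continuous one. Concretely, projecting both totalizations onto the ``discrete'' part of the data (strings of underlying configurations, forgetting the radii) should factor the inclusion $\Tot Y^\bullet\hookrightarrow\Tot X^\bullet$ through a common space to which both sides project by a weak equivalence. The main obstacle will be organizing this induction so that the contractibility of the extension problems propagates consistently across cosimplicial degrees; morally, this amounts to integrating out the continuous radius factors of $\con(K)$ using the stratified isotopy invariance from Step 2, so that only the discrete information about configurations contributes to the homotopy type of the totalization.
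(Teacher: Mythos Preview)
The paper does not actually prove this lemma; it defers to \cite[Lemma~1.2.1]{occupants} and says the same argument applies verbatim. So there is no detailed proof to compare against, but your proposal can still be assessed on its own.

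Step~1 is fine and matches what the paper records. Step~2 contains a misstatement: $\Phi$ is \emph{not} a good functor in the sense of Definition~\ref{good}. The paper says this explicitly at the beginning of \S4.2---the functor $V\mapsto M\setminus V$ fails the isotopy axiom, which is the whole reason the modified functor $F$ is introduced. What you actually need, and what is true, is only that the fibers of the bundle $E^!_r(\Phi)\to N_r(\con(K))$ are homotopy equivalent along paths; this comes from the fiber-bundle structure already noted, not from goodness of $\Phi$.

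The real gap is Step~3. You write that the cosimplicial compatibility conditions ``force a set-theoretic system of sections to be homotopic to a continuous one,'' but no mechanism is given. An element of $X^r=\prod_{s}\Phi(s)$ is an arbitrary choice of a point in each fiber; the face and degeneracy relations link different cosimplicial degrees $r$, they do not impose any continuity across the uncountable, non-discrete space $N_r(\con(K))$ at a fixed $r$. Your proposal to ``project onto the discrete part by forgetting the radii'' does not help either, since forgetting radii lands in configuration spaces, which are themselves non-discrete. What is missing is a concrete intermediary---typically a subposet or auxiliary (co)simplicial object on which the continuous and discrete section spaces literally agree---together with a verification that the inclusions into $Y^\bullet$ and $X^\bullet$ are degreewise weak equivalences. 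That is the content of the argument in \cite{occupants} being invoked; without supplying such a device, Step~3 is an outline of what one hopes is true rather than a proof.
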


We skip the proof because it is equal to the proof of \cite[Lemma 1.2.1]{occupants}. (If we replace the manifold $L$ appearing in \cite[1.2.1]{occupants} by the simplicial complex $K$, then we get a proof for Lemma \ref{holimcomp}.)

Using this result, we can work in the following with either of these homotopy limits - the discrete homotopy limit or the continuous homotopy limit.

\begin{rmk}\label{resholimcomp}
We will need the following observation. For an open subset $U$ of $K$, let $\con(K)|_{U}$ be the full subcategory of $\con(K)$ such that the objects are all elements $(T,\rho)$ in $\con(K)$ with $V_{K}(T,\rho) \subset U$. For $r\geq 0$, let $\Gamma_{r}(\Phi)|_{U}$ be the space of all continuous maps $f: N_{r}(\con(K)|_{U}) \rightarrow M$ such that
\[
f((T_{0},\rho_{0}) \leq ... \leq (T_{r},\rho_{r}))\in M\setminus V_{K}(T_{r},\rho_{r})
\]
Now we define $\text{ctsholim}_{\con(K)|_{U}} \Phi$ to be the totalization of the cosimplicial space $r\mapsto \Gamma_{r}(\Phi)|_{U}$. There is a canonical inclusion map
\[
\ctsholimsub{\con(K)|_{U}}\Phi \rightarrow \holimsub{\con(K)|_{U}}\Phi
\]
which is a weak equivalence. The proof is equal to that of Lemma \ref{holimcomp}.
\end{rmk}

\begin{rmk}
The cosimplicial space $r\mapsto \Gamma_{r}(\Phi)|_{U}$ is Reedy fibrant for every open subset $U$ of $K$. The verification is equal to that in \cite[1.1.3]{occupants}. Recall that for a map $X\rightarrow Y$ between cosimplicial spaces which is a degreewise weak equivalence, the map of their totalizations $\Tot (X)\rightarrow \Tot (Y)$ is a weak equivalence.
\end{rmk}

\section{The main theorem}

We formulate the main theorem and apply manifold calculus (adapted to simplicial complexes) in order to prove it.

\subsection{The formulation of the problem}\label{formulation}

We remind the reader that $M$ is a smooth manifold and $K\subset M$ is a simplicial complex such that each (closed) simplex of $K$ is smoothly embedded in $M$. For each element $(T,\rho)$ of the configuration category $\con(K)$, there is an inclusion map
\[
M \setminus K \rightarrow M \setminus V_{K}(T,\rho)
\]
where $V_{K}(T,\rho)$ is the open subset of $K$ corresponding to the pair $(T,\rho)$. If we define a contravariant functur $\Phi$ from $\con(K)$ to topological spaces by $\Phi ((T,\rho)) := M \setminus V_{K}(T,\rho)$, then the inclusion maps induce a canonical map
\begin{align}\label{canmap}
M \setminus K \rightarrow  \holimsub{\con(K)} \Phi
\end{align}
We can ask if the canonical map is a weak equivalence. 
There is a variant with restricted cardinalities. Let $n\geq 0$ be an integer. Then we define $\con_{\leq n}(K)$ to be the full subcategory of $\con(K)$ where the objects are all elements $(T,\rho)$ of $\con(K)$ with $\left|T\right| \leq n$. Again, we get a canonical map
\begin{align}\label{canmapres}
M \setminus K \rightarrow  \holimsub{\con_{\leq n}(K)} \Phi
\end{align}
induced by inclusions. In this case we do not expect that this map is a weak equivalence. But we can ask if it is highly connected. In the following theorem we use the notations $m := \dim (M)$ and $\kappa := \dim (K)$.

\begin{thm}\label{thetheorem}
If $\kappa+3\leq m$, then the canonical map (\ref{canmap}) is a weak equivalence and (\ref{canmapres}) is $1+(n+1)(m-\kappa-2)$-connected.
\end{thm}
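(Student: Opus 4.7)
The plan is to apply manifold calculus adapted for simplicial complexes to the contravariant functor
\[
F \co \mathcal{O}(K) \lra (Top), \qquad V \mapsto M\smin V,
\]
mirroring the strategy of \cite{occupants}. One first checks that $F$ is good in the sense of Definition \ref{good}: continuity under monotone unions is formal, and stratified isotopy invariance follows from simplexwise ambient isotopy extension, which is available because every closed simplex of $K$ is smoothly embedded in $M$.

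Next I would identify $T_n F(K)$ with $\holim_{\con_{\leq n}(K)}\Phi$. By construction in \cite{manifoldcalculusadapted} one has $T_n F(K) \simeq \holim_{\mathcal{O}_{\leq n}(K)}F$. The assignment $(T,\rho)\mapsto V_K(T,\rho)$ defines a functor $\con_{\leq n}(K)\to \mathcal{O}_{\leq n}(K)$, and by Definition \ref{specialopenset} every component of a special open set is stratified isotopy equivalent to a standard ball of the form $V_K(\{t\},\rho)$. Since $F$ converts stratified isotopy equivalences to weak equivalences, a cofinality argument produces a weak equivalence $T_n F(K)\simeq \holim_{\con_{\leq n}(K)}\Phi$, and the corresponding statement for $n=\infty$ follows by passing to homotopy limits. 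Lemma \ref{holimcomp} lets one work interchangeably with the discrete or continuous version.

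The heart of the argument is to verify that $F$ is $(m-2)$-analytic with excess $1$, via Proposition \ref{vergleichmcfc}. Let $P$ and pairwise disjoint $A_0,\ldots,A_k\subset K\smin\intr(P)$ with handle indices $q_{A_i}$ be as in the proposition. Pairwise disjointness gives the set-level identities
\[
\intr(P\cup A_{T\cup\{i\}})\cup\intr(P\cup A_{T\cup\{j\}})=\intr(P\cup A_{T\cup\{i,j\}}),
\]
\[
\intr(P\cup A_{T\cup\{i\}})\cap\intr(P\cup A_{T\cup\{j\}})=\intr(P\cup A_T),
\]
and after replacing each $\intr(A_i)$ by a small open tubular neighborhood in $M$ --- which does not change the homotopy type of the complement, since each $A_i$ is smoothly simplexwise embedded --- every 2-face of the cube $T\mapsto F(\intr(P\cup A_T))$ becomes a Mayer--Vietoris homotopy pushout, so the cube is strongly cocartesian. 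For the connectivity hypothesis, the map $F(\intr(P\cup A_{[k]}))\to F(\intr(P\cup A_{[k]\smin\{i\}}))$ is the inclusion $(M\smin Y)\smin\intr(A_i)\hookrightarrow M\smin Y$, and since $A_i$ has effective dimension $\leq q_{A_i}$ in the $m$-manifold $M$, general position makes this inclusion $(m-1-q_{A_i})$-connected. With $\delta=m-1$, Proposition \ref{vergleichmcfc} then yields $(m-2)$-analyticity with excess $1$.

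Finally, Theorem \ref{analytic} applied with $\rho=m-2$, $c=1$ and $k=n+1$ gives that $F(K)\to T_n F(K)$ is $\bigl(1+(n+1)(m-\kappa-2)\bigr)$-connected, which via the identification above proves the claim about (\ref{canmapres}). Since the codimension hypothesis $m-\kappa\geq 3$ forces $\rho=m-2>\kappa$, the corollary following Theorem \ref{analytic} gives $F(K)\simeq T_\infty F(K)\simeq \holim_{\con(K)}\Phi$, establishing that (\ref{canmap}) is a weak equivalence. The main obstacle is the strongly cocartesian verification: the set-level pushout identities are immediate from pairwise disjointness, but upgrading them to genuine homotopy pushouts requires the tubular-neighborhood thickening, which is precisely where the smooth simplexwise embedding of $K$ in $M$ is genuinely used.
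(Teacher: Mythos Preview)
Your overall architecture is right, but there is a genuine gap at the very first step. The functor $V\mapsto M\smin V$ is \emph{not} good: it fails to take stratified isotopy equivalences to weak equivalences. The paper says this explicitly at the start of \S4.2 and points to a counterexample in \cite[1.3]{occupants}. Your justification via ``simplexwise ambient isotopy extension'' does not apply, because a stratified isotopy equivalence $V_0\hookrightarrow V_1$ is a statement about \emph{open} subsets of $K$; the isotopy lives inside $V_1$ and does not extend to an ambient isotopy of $M$ carrying the closed set $M\smin V_1$ onto $M\smin V_0$. The boundary behaviour of the open sets is not controlled.

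The paper's repair is to replace $M\smin V$ by
\[
F(V)\;=\;\holimsub{C\subset V,\;C\text{ compact}} M\smin C,
\]
and Lemma \ref{Fisgood} proves this $F$ is good by reducing to compact subsets, where the isotopy extension theorem for stratified spaces \cite[6.5]{siebenmann} \emph{does} apply. One then checks that $M\smin K\to F(K)$ and $M\smin V_K(T,\rho)\to F(V_K(T,\rho))$ are weak equivalences (directed compact exhaustions), so the replacement is harmless for the final statement. For the analyticity verification the paper observes, exactly as you do, that on the special sets $W_T=\intr(P\cup A_T)$ one may switch back to the naive $G(W_T)=M\smin W_T$ because $G(W_T)\to F(W_T)$ is a weak equivalence there; the strongly-cocartesian and connectivity checks then proceed essentially as you wrote them. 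So your analyticity paragraph and your invocation of Theorem \ref{analytic} are correct in spirit---you just need the modified $F$ to make the goodness step go through.

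A secondary point: your cofinality argument identifying $T_nF(K)$ with the holim over $\con_{\leq n}(K)$ is more delicate than you indicate. The paper handles it in Lemma \ref{vergleich} by passing to the \emph{continuous} homotopy limit (Remark \ref{resholimcomp}) and comparing section spaces, rather than by a purely discrete cofinality statement.
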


\begin{rmk}
The homotopy limit appearing in (\ref{canmap}) is the ordinary (or discrete) homotopy limit. By Lemma \ref{holimcomp}, we could also use the continuous homotopy limit and the theorem would still hold. Using similar arguments, we could also use the continuous homotopy limit in (\ref{canmapres}).
\end{rmk}

\begin{rmk}
We assumed that the codimension of $M$ and $K$ is at least three. In fact, the theorem would be false without this assumption. There is a nice counterexample in codimension two \cite[Remark 1.3.3]{occupants}.
\end{rmk}

\begin{rmk}
The theorem is a generalization of \cite[Theorem 1.1.1]{occupants}. Let $L$ be a compact, smooth submanifold (without boundary) of $M$ where the codimension of $M$ and $L$ is at least three. We can choose a triangulation of $L$ and get a simplicial complex $K$, i.e. $K=L$ as topological space but the configuration categories $\con(L)$ and $\con(K)$ are not equal. Then there is a zigzag of weak equivalences
\[
\holimsub{(T,\rho)\in \con(L)} \Phi((T,\rho))\hspace{0,1cm} \leftarrow \holimsub{U \in \cup_{k}\mathcal{O}k(L)} M\setminus U \hspace{0,1cm}\rightarrow \holimsub{(T,\rho)\in \con(K)} \Phi((T,\rho))
\]
where $\cup_{k}\mathcal{O}k(L)$ is the category of all special open subsets of $L$ \cite{mancal1}. These are all open subsets of $L$ which are diffeomorphic to a disjoint union of open disks. Both maps are given by inclusion of categories. 
\end{rmk}

\subsection{A good functor}

In order to prove Theorem \ref{thetheorem}, we would like to apply manifold calculus (adapted to simplicial complexes). Naively, one could suggest to apply the approximation theorem (Theorem \ref{analytic}) to the contravariant  functor which maps an open subset $V\subset K$ to the topological space $M\setminus V$. Unfortunately, this functor is not good because in general it does not take stratified isotopy equivalences to weak equivalences (for a counterexample, see \cite[1.3]{occupants}). Therefore, we need a modification.

\begin{defi} 
We define the functor $F$ from the category $\mathcal{O}(K)$ of open subsets of $K$ to topological spaces by
\[
F(V) := \holimsub{C\subset V} M \setminus C
\]
where $C$ runs over all compact subsets of $V$.
\end{defi}

We will see that $F$ is an appropriate replacement of the functor $V\mapsto M\setminus V$.
The proof in the following lemma is similar to that of \cite[1.3.1]{occupants}. For the sake of completeness, we will give all required arguments. 

\begin{lem}\label{Fisgood}
The functor $F$ is good (in the sense of Definition \ref{good}).
\end{lem}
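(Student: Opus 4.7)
The plan is to verify the two conditions of Definition~\ref{good} separately, starting with the easier sequential-union axiom. Write $\mathcal{C}(V)$ for the poset of compact subsets of $V$, ordered by inclusion; then by definition $F(V)=\holim_{C\in\mathcal{C}(V)} M\setminus C$.

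\textbf{Axiom (2).} For a nested family $V_{0}\subset V_{1}\subset\cdots$ with $V=\bigcup_{i}V_{i}$, I would identify both sides as homotopy limits of the diagram $C\mapsto M\setminus C$ over suitable indexing categories. By a Fubini identification, $\holim_{i}F(V_{i})$ becomes the homotopy limit over the category $\mathcal{D}$ of pairs $(i,C)$ with $C\in\mathcal{C}(V_{i})$, of the functor $(i,C)\mapsto M\setminus C$. For the forgetful functor $u\co\mathcal{D}\to\mathcal{C}(V)$, $(i,C)\mapsto C$, the comma category $u/C$ consists of pairs $(i,C')$ with $C'\subset V_{i}\cap C$; it is nonempty (by compactness of $C$ and the openness of the cover, some $V_{i}$ contains $C$) and filtered (any $(i_{1},C'_{1}),(i_{2},C'_{2})$ are dominated by $(\max(i_{1},i_{2}),\,C'_{1}\cup C'_{2})$). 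Hence the comma categories have contractible nerves and the standard cofinality theorem for homotopy limits yields $F(V)\simeq \holim_{i}F(V_{i})$.

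\textbf{Axiom (1).} Let $j\co U\hookrightarrow V$ be an open inclusion that is a stratified isotopy equivalence, with quasi-inverse $e\co V\to U$ and a stratified isotopy $H\co V\times[0,1]\to V$ from $\id_{V}$ to $j\circ e$. The key geometric step is that near any compact $C\subset V$, the isotopy $H$ extends to a compactly supported ambient continuous isotopy $\bar{H}\co M\times[0,1]\to M$ with $\bar{H}_{0}=\id_{M}$. Such an extension is available because every simplex of $K$ is smoothly embedded in $M$, so $K$ has a regular neighborhood in $M$; one lifts $H$ to a continuous isotopy of a chosen regular neighborhood via its collapse onto $K$, and then tapers to the identity outside a compact neighborhood of $C$ by a bump function. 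The endpoint $\bar{H}_{1}$ sends $C$ into $U$ and provides a homeomorphism $M\setminus C\cong M\setminus\bar{H}_{1}(C)$ with $\bar{H}_{1}(C)\in\mathcal{C}(U)$.

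Packaging these ambient isotopies into a structure natural in $C$, I would then show that the inclusion of indexing categories $\mathcal{C}(U)\hookrightarrow\mathcal{C}(V)$ is cofinal for the diagram $C\mapsto M\setminus C$ up to these homeomorphisms, yielding a weak equivalence on homotopy limits. The most delicate point, and what I expect to be the main obstacle, is producing the ambient extensions $\bar{H}$ coherently as $C$ varies over $\mathcal{C}(V)$. If the direct construction becomes unwieldy, I would pass to the continuous homotopy limit model of Section~\ref{conthomotlim} (transferring back via Lemma~\ref{holimcomp}), where the family of ambient isotopies can be encoded as a continuous path in the topological poset underlying the indexing, thereby sidestepping the need for a strict natural choice.
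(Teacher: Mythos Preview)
Your treatment of axiom~(2) is fine and essentially what the paper dismisses as ``obvious''. The issues are all in axiom~(1).

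First, your proposed construction of the ambient extension $\bar H$ does not work as stated. A collapse $r\co N\to K$ from a regular neighborhood is only a retraction; given a stratified isotopy $H_t$ of an open set in $K$ there is no evident way to ``lift along $r$'' to an isotopy of $N$, since $H_t\circ r$ lands back in $K$, not in $N$. What is actually needed here is an isotopy \emph{extension} theorem for the stratified situation, and that is exactly what the paper invokes: Siebenmann's isotopy extension theorem for stratified spaces \cite[6.5]{siebenmann} (see Remark~\ref{isotextthm}). So the correct input is a cited theorem, not an ad~hoc regular-neighborhood construction.

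Second, your fallback for the coherence problem---passing to the continuous homotopy limit of Section~\ref{conthomotlim}---is not available: that machinery is set up for the configuration category $\con(K)$, which carries a natural topology, whereas the indexing poset $\mathcal C(V)$ of \emph{all} compact subsets has no such continuous structure, and Lemma~\ref{holimcomp} says nothing about it. Relatedly, the inclusion $\mathcal C(U)\hookrightarrow\mathcal C(V)$ is not homotopy terminal (a compact $C\subset V$ not contained in $U$ has empty under-category in $\mathcal C(U)$), so the ``cofinal up to homeomorphisms'' slogan is not a cofinality argument at all and needs to be made precise some other way.

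The paper circumvents both problems simultaneously. It first replaces $\mathcal C(V_1)$ by a cofinal $\NN$-indexed sequence $\{C_i\}$, then uses the reformulated isotopy (Remark~\ref{equivisot}) to \emph{transport} this to a cofinal sequence $C_{0,i}:=e_0(e_1^{-1}(C_i))$ in $\mathcal C(V_0)$. For each fixed $i$ one gets a zigzag
\[
M\setminus C_{0,i}\ \longleftarrow\ Y_i\ \longrightarrow\ M\setminus C_{1,i},
\]
where $Y_i$ is a space of paths avoiding the moving compact set $C_{t,i}$; Siebenmann's theorem makes both evaluations homotopy equivalences. These zigzags are strictly natural in $i\in\NN$, so there is no coherence problem. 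Finally, to identify the resulting equivalence with the \emph{canonical} restriction map $F(V_1)\to F(V_0)$, the paper chooses a monotone $g\co\NN\to\NN$ with $C_{t,i}\subset C_{1,g(i)}$ and checks that a small diagram involving $\holim_i Y_i$ commutes up to homotopy. This reduction to $\NN$-indexed sequences is the device you are missing.
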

\begin{proof}
First, we notice that the (co-)limit axiom is fulfilled. This is obvious. In order to show that the functor takes stratified isotopy equivalences to weak homotopy equivalences, we will use the reformulation of stratified isotopy equivalences as given in Remark \ref{equivisot}. To this end, let $V_{0}$ and $V_{1}$ be two open subsets of $K$ with $V_{0}\subset V_{1}$ and let $e_{t}:V_{0}\rightarrow V_{1}$, $t\in \left[ 0,1 \right]$, be a stratified isotopy such that $e_{0}$ is the inclusion and for each simplex $S$ of $K$, $e_{1}$ is a homeomorphism such that $e_{1}|_{S}:S\cap V_{0}\rightarrow S\cap V_{1}$ can be extended to a diffeomorphism (see Remark \ref{equivisot}).  \\
Let $\left\{C_{i}\right\}_{i\geq 0}$ be a sequence of compact subsets of $V_{1}$ such that $C_{i}\subset C_{i+1}$ for all $i\geq 0$ and such that for every compact subset $C$ of $V_{1}$, there is an element $C_{i}$ of this sequence with $C\subset C_{i}$. By definition, the inclusion 
\[
\left\{C_{i}\right\}_{i\geq 0} \rightarrow \left\{C\subset V_{1} \mid C\hspace{0,2cm} \text{compact} \right\}
\]
is homotopy terminal. (Note that the morphisms are the inclusions of compact subsets.) Therefore, the canonical map
\[
F(V_{1})\rightarrow \text{holim}_{i} \hspace{0,1cm} M \setminus C_{i}
\]
is a weak equivalence. 
Now we define the compact sets $C_{t,i}:= e_{t} (e_{1}^{-1}(C_{i}))$. Note that $C_{1,i}=C_{i}$. By definition, the inclusion
\[
\left\{C_{0,i}\right\}_{i\geq 0} \rightarrow \left\{C\subset V_{0} \mid C\hspace{0,2cm} \text{compact} \right\}
\]
is homotopy terminal and induces a weak equivalence 
\[
F(V_{0}) \rightarrow \text{holim}_{i} \hspace{0,1cm} M\setminus C_{0,i}
\]
We fix the following notation: 
\[
Y_{i} := \left\{ w:\left[ 0,1 \right]\rightarrow M \mid w(t)\notin M\setminus C_{t,i} \right\}
\]
There are evaluation maps $Y_{i}\rightarrow M\setminus C_{0,i}$ and $Y_{i}\rightarrow M\setminus C_{1,i}$.
Using the isotopy extension theorem \cite[6.5]{siebenmann}, 
it is straightforward to find homotopy inverses. For a comment on the isotopy extension theorem for stratified spaces, see Remark \ref{isotextthm}. We get homotopy eqivalences
\[
 M\setminus C_{0,i} \longleftrightarrow Y_{i} \longleftrightarrow  M\setminus C_{1,i}
\]

Since the evaluation maps are natural, we get weak equivalences
\[
\text{holim}_{i} \hspace{0,1cm} M\setminus C_{0,i} \leftarrow \text{holim}_{i}\hspace{0,1cm} Y_{i} \rightarrow \text{holim}_{i}\hspace{0,1cm} M\setminus C_{1,i}
\]
To summarize, we have shown that the spaces $F(V_{1})$ and $F(V_{0})$ are weakly equivalent. Now we have to argue that the canonical map $F(V_{1})\rightarrow F(V_{0})$ induced by inclusion is a weak equivalence. \\
Let $g : \N \rightarrow \N$ be a monotone injective function such that for every $i\in \N$ and $t\in \left[ 0,1 \right]$, the compact set $C_{t,i}$ is a subset of $C_{1,g(i)}$. We consider the composition
\[
\Psi : \text{holim}_{i} \hspace{0,1cm} M\setminus C_{1,i} \rightarrow \text{holim}_{i}\hspace{0,1cm} M\setminus C_{1,g(i)} \rightarrow \text{holim}_{i}\hspace{0,1cm} M\setminus C_{0,i}
\]
where the first map is induced by the inclusion $\left\{C_{1,g(i)}\right\}_{i} \rightarrow \left\{C_{1,i}\right\}_{i}$ of categories and the second map is induced by the inclusions $C_{0,i}\hookrightarrow C_{1,g(i)}$, $i\in\N$, of spaces. In order to verify that the composition $\Psi$ is a weak equivalence, we consider the following homotopy commutative triangle

\hspace{2cm}\begin{xy}
  \xymatrix{
      \text{holim}_{i} \hspace{0,1cm} Y_{i} \ar[rr]^{\cong} \ar[rd]^{\cong}  &     &  \text{holim}_{i} \hspace{0,1cm} M\setminus C_{1,i} \ar[dl]^{\Psi}  \\
                             &  \text{holim}_{i} \hspace{0,1cm} M\setminus C_{0,i}  &
  }
\end{xy} \\
\vspace{0,2cm}

It does not seem to be trivial that the triangle is homotopy commutative. But by careful inspection, the definition of the homotopy limit provides a homotopy whereby the triangle is homotopy commutative. Using the same argument, we get a homotopy commutative square

\begin{equation*}
\begin{gathered}
\xymatrix{
F(V_{1}) \ar[d]^{\cong} \ar[r] & F(V_{0}) \ar[d]^{\cong} \\
\text{holim}_{i} \hspace{0,1cm} M\setminus C_{1,i}\hspace{0,1cm} \ar[r]^{\Psi} & \hspace{0,1cm}\text{holim}_{i} \hspace{0,1cm} M\setminus C_{0,i}
}
\end{gathered}
\end{equation*}
Since $\Psi$ is a weak equivalence, the canonical map $F(V_{1})\rightarrow F(V_{0})$ is also a weak equivalence.
\end{proof}

\begin{rmk}\label{equivisot}
We need a slight reformulation of a stratified isotopy equivalence. According to Definition \ref{isotopyequ}, an inclusion $i:V_{0}\rightarrow V_{1}$ of open subsets of $K$ is a stratified isotopy equivalence if there is a continuous map $e:V_{1}\rightarrow V_{0}$ such that $e|_{V_{1}\cap S}$ is a smooth embedding from $V_{1}\cap S$ into $V_{0}\cap S$ for all simplices $S$ of $K$ and if there are a stratified isotopy from $i\circ e$ to $id_{V_{1}}$ and a stratified isotopy from $e\circ i$ to $id_{V_{0}}$. The following definition would also be appropriate: We could call an inclusion $i:V_{0}\rightarrow V_{1}$ of open subsets of $K$ a stratified isotopy equivalence if $i$ is stratified isotopic to a homeomorphism $e:V_{0}\rightarrow V_{1}$ such that $e|_{V_{0}\cap S}$ is a diffeomophism from $V_{0}\cap S$ to $V_{1}\cap S$ for all simplices $S$ of $K$. (Note that $S$ is not a manifold, so more precisely we should say: The map $e|_{V_{0}\cap S}$ from $V_{0}\cap S$ to $V_{1}\cap S$ can be extended to a diffeomophism using that $S$ is canonically embedded in an euclidean space.)  \\
Why is the second definition of stratified isotopy equivalences also appropriate? We do not know if these definitions are equivalent, but it is straightforward to verify the following claim: Let $G:\mathcal{O}(K)\rightarrow (Top)$ be a contravariant functor. Then $G$ takes stratified isotopy equivalences as in Definition \ref{isotopyequ} to weak equivalences if and only if $G$ takes stratified isotopy equivalences as in the second definition to weak equivalences.
\end{rmk}

\begin{rmk}\label{isotextthm}
In the proof of the last lemma we can use a continuous version of the isotopy extension theorem for stratified spaces as provided in \cite[6.5]{siebenmann}. It is straightforward to check that in our situation all conditions of \cite[6.5]{siebenmann} are satisfied.
\end{rmk}

\subsection{Proof of the main theorem}

Now we prove Theorem \ref{thetheorem}, i.e. we show that the top horizontal arrow in the commutative diagram
\[
\xymatrix@M=6pt@R=17pt{
M\setminus K
\ar[d] \ar[r] & {\rule{0mm}{9mm}\holimsub{(T,\rho)\in\con(K)} M\setminus V_{K}(T,\rho)} \ar[d] \\
F(K) \ar[r] & {\rule{0mm}{6mm}\holimsub{(T,\rho)\in\con(K)} F(V_{K}(T,\rho))} 
}
\]
%
%
is a weak equivalence.
The left vertical arrow is a weak equivalence because $K$ is a maximal element in the category (poset) of all compact subsets of $K$. The right vertical arrow is a weak equivalence because for every $(T,\rho)\in\con(K)$, the category of all compact subsets of $V_{K}(T,\rho)$ has a directed subcategory which is homotopy terminal. Therefore, we have to show that the bottom horizontal arrow is a weak equivalence. To this end, we will use the good properties of the functor $F$ and manifold calculus (adapted to simplicial complexes). The bottom arrow equals the composition
\[
F(K) \hspace{0,1cm}\rightarrow \holimsub{U\in\cup_{k} \mathcal{O}k(K)} F(U) \hspace{0,1cm}\rightarrow \holimsub{(T,\rho)\in\con(K)} F(V_{K}(T,\rho))
\]
where the first map is the canonical map and the second map is induced by the inclusion of posets
\[
\con (K) \rightarrow  \cup_{k} \mathcal{O}k(K)
\]
given by $(T,\rho) \mapsto V_{K}(T,\rho)$. Therefore, the following two lemmata complete the proof. (The proof of the case with restricted cardinalities follows similar lines.)

\begin{lem}\label{vergleich}
The canonical projection map 
\[
\holimsub{U\in\cup_{k} \mathcal{O}k(K)} F(U) \rightarrow \holimsub{(T,\rho)\in\con(K)} F(V_{K}(T,\rho))
\]
induced by the inclusion $\con (K) \rightarrow  \cup_{k} \mathcal{O}k(K)$ is a weak equivalence.
\end{lem}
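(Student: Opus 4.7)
The strategy is to apply Quillen's Theorem A (in the form valid for homotopy limits) to the functor $V\colon \con(K)\to\cup_{k}\mathcal{O}k(K)$, $(T,\rho)\mapsto V_{K}(T,\rho)$. The statement of the lemma reduces to showing that, for every special open set $U\in\cup_{k}\mathcal{O}k(K)$, the classifying space of the full subcategory
\[
\con(K)|_{U}\;=\;\bigl\{(T,\rho)\in\con(K)\,:\,V_{K}(T,\rho)\subset U\bigr\}
\]
from Remark \ref{resholimcomp} is weakly contractible. By that same remark I may work with the continuous homotopy limit throughout, which is convenient for handling the topology on $\con(K)$.

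First I would reduce to the case $U$ connected. If $U=U_{1}\sqcup\cdots\sqcup U_{r}$ is the decomposition into connected components, then since each ball of a configuration $(T,\rho)\in\con(K)|_{U}$ is connected it lies in a unique $U_{i}$; the induced decomposition of $(T,\rho)$ yields an isomorphism of topological posets $\con(K)|_{U}\cong\prod_{i}\con(K)|_{U_{i}}$, and a finite product of contractible spaces is contractible. By Definition \ref{specialopenset}, a connected $U\in\cup_{k}\mathcal{O}k(K)$ is stratified isotopy equivalent to some open star $\mathcal{S}_{x}$; using the stratified isotopy-extension theorem invoked in Remark \ref{isotextthm}, this equivalence induces a weak equivalence $B\con(K)|_{U}\simeq B\con(K)|_{\mathcal{S}_{x}}$, so we reduce further to the case $U=\mathcal{S}_{x}$.

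For the core argument I would exploit the fact that $\mathcal{S}_{x}$ is star-shaped at $x$, simplex by simplex. The radial scaling $\phi_{s}(y):=x+s(y-x)$ for $s\in(0,1]$ is stratified and induces an endofunctor $\Phi_{s}(T,\rho):=(\phi_{s}(T),s\rho)$ of $\con(K)|_{\mathcal{S}_{x}}$ together with a natural transformation $\Phi_{s}\Rightarrow\mathrm{id}$ (the inclusion $V_{K}(\Phi_{s}(T,\rho))\subset V_{K}(T,\rho)$ is a morphism in the poset). For $s$ sufficiently small $\Phi_{s}$ factors through $\con(K)|_{B^{d}_{r}(x)}$ for any prescribed $r>0$; combining this with the total ordering on the one-parameter family $\{(\{x\},r)\}_{r\in(0,r_{0})}$ gives a zig-zag of natural transformations contracting $B\con(K)|_{\mathcal{S}_{x}}$ onto the nerve of $(0,r_{0})$, which is contractible. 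This mirrors the manifold analogue used in \cite{occupants}.

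The main obstacle is making this contraction honest: as $s\to 0$ the centres of $\phi_{s}(T)$ collapse onto $x$, so no literal $s=0$ endofunctor exists and the natural transformations $\Phi_{s}\Rightarrow\mathrm{id}$ must be woven together continuously rather than evaluated at the limit. The resolution uses the continuous homotopy limit model to build the contracting homotopy in the realisation of the nerve parametrised by $s\in(0,1]$, gluing the shrinkings $\Phi_{s}$ with the ordering on the family $\{(\{x\},r)\}_{r}$. A secondary technical point is the reduction from a general connected $U$ to $\mathcal{S}_{x}$, which requires a careful application of the stratified isotopy-extension theorem in order to promote a stratified isotopy equivalence of open sets to a weak equivalence of the classifying spaces of the associated configuration subcategories.
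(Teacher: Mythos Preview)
Your reduction via Quillen's Theorem~A misidentifies the relevant comma category. For the restriction map on homotopy limits of a \emph{contravariant} functor along $V\colon\con(K)\to\cup_{k}\mathcal{O}k(K)$, the cofinality criterion asks that the categories $U\!\downarrow\!V=\{(T,\rho):U\subset V_K(T,\rho)\}$ have contractible nerve, not the categories $\con(K)|_U=\{(T,\rho):V_K(T,\rho)\subset U\}$. The former are typically empty: take $U=\mathcal{S}_v$ for a vertex $v$ in a complex of dimension $\ge 2$. Each ball of a configuration lies in some open star, vertex-stars are maximal among stars, and no metric ball contained in $\mathcal{S}_v$ can equal $\mathcal{S}_v$ (points of an adjacent top simplex near its face opposite $v$ lie in $\mathcal{S}_v$ but force any ball reaching them to meet that face). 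Hence no $V_K(T,\rho)$ contains $\mathcal{S}_v$, and the Theorem~A route is blocked. Even granting your contractibility of $\con(K)|_U$, that alone does not compute $\holim_{\con(K)|_U}F\circ V$ nor compare it with $F(U)$; the functor $F$ has to enter.

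This is exactly why the paper uses the pointwise criterion \cite[Thm.~6.14]{Dugger} instead: it suffices that $F(U)\to\holim_{\con(K)|_U}F(V_K(T,\rho))$ be a weak equivalence for every $U$. Here the over category $\con(K)|_U$ is the right one, but the argument is not about its homotopy type. One picks $(T',\sigma)\in\con(K)|_U$ with $W:=V_K(T',\sigma)\hookrightarrow U$ a stratified isotopy equivalence, so $F(U)\simeq F(W)$ by \emph{goodness of $F$}; then $(T',\sigma)$ is terminal in $\con(K)|_W$, giving $\holim_{\con(K)|_W}F\circ V\simeq F(W)$; and finally $\holim_{\con(K)|_U}\simeq\holim_{\con(K)|_W}$ is obtained by passing to the continuous model (Remark~\ref{resholimcomp}) and using that the inclusion of topological posets $\con(K)|_W\hookrightarrow\con(K)|_U$ induces levelwise weak equivalences of section spaces. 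Your radial-shrinking idea is morally this last comparison, but it cannot replace the appeal to goodness of $F$ in the first step.
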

\begin{proof}
By \cite[Theorem 6.14]{Dugger}, it remains to show that the canonical map
\[
F(U) \rightarrow \holimsub{(T,\rho)\in\con(K)|_{U}} F(V_{K}(T,\rho))
\]
is a weak equivalence for all $U\in\cup_{k}\mathcal{O}k(K)$. Recall that $\con(K)|_{U}$ is the full subcategory of $\con(K)$ where the objects are all elements $(T,\rho)$ in $\con(K)$ with $V_{K}(T,\rho)\subset U$. For a fixed $U\in\cup_{k}\mathcal{O}k(K)$, we choose an element $(T',\sigma)\in\con(K)|_{U}$ such that the map $F(U)\rightarrow F(V_{k}(T,\sigma))$ is a weak equivalence. 
We set $W:=V_{K}(T',\sigma)$ and consider the following commutative diagram 
\[
\xymatrix@M=6pt@R=17pt{
F(U)
\ar[d] \ar[r] & {\rule{0mm}{9mm}\holimsub{(T,\rho)\in\con(K)|_{U}} F(V_{K}(T,\rho))} \ar[d] \\
F(W) \ar[r] & {\rule{0mm}{6mm}\holimsub{(T,\rho)\in\con(K)|_{W}} F(V_{K}(T,\rho))} 
}
\]
%
%
%
%
The bottom arrow is a weak equivalence because $W$ is a maximal element in $\con(K)|_{W}$. In order to show that
the right vertical arrow is a weak equivalence, we will consider the two appearing homotopy limits as continuous homotopy limits. This is allowed by Remark \ref{resholimcomp}.
Then we compare the two spaces $\con(K)|_{W}$ and $\con(K)|_{U}$. By definition of their topologies, the inclusion $\con(K)|_{W} \rightarrow \con(K)|_{U}$ is a weak equivalence. Similarly, the maps of section spaces $\Gamma_{r}(\Phi)|_{U}\rightarrow \Gamma_{r}(\Phi)|_{W}$ are weak equivalences for all $r\geq 0$. So they induce a weak equivalence of continuous homotopy limits.
\end{proof}

\begin{lem}
The canonical map
\[
F(K) \rightarrow \holimsub{U\in\cup_{k} \mathcal{O}k(K)} F(U)
\]
is a weak equivalence.
\end{lem}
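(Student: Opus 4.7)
The strategy is to apply the Taylor approximation theorem of manifold calculus adapted for simplicial complexes (the corollary following Theorem \ref{analytic}) to the good functor $F$. The plan is first to establish $\rho$-analyticity of $F$ for some $\rho > \kappa$, and then to identify the target of the canonical map with $T_\infty F(K)$.

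First I would show that $F$ is $(m-2)$-analytic with excess $1$, where $m = \dim M$. Let $P$ be a compact codimension-zero subobject of $K$, and let $A_0,\dots,A_r \subset K \setminus \intr P$ be pairwise disjoint compact codimension-zero subobjects with relative handle indices $q_{A_i}$. The goodness of $F$ (Lemma \ref{Fisgood}) together with the cofinality of exhausting compact subfamilies identifies the cube $T \mapsto F(\intr(P \cup A_T))$ with the cube $T \mapsto M \setminus (P \cup A_T)$. Since the $A_i$'s are pairwise disjoint, a direct computation shows that each two-face is a Mayer--Vietoris square of open subsets of $M$:
\[
(M \setminus (P \cup A_{S \cup \{i\}})) \cup (M \setminus (P \cup A_{S \cup \{j\}})) = M \setminus (P \cup A_{S \cup \{i,j\}}),
\]
and the corresponding intersection equals $M \setminus (P \cup A_S)$. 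Each such two-face is therefore a homotopy pushout, so the cube is strongly cocartesian. Moreover, each edge map $M \setminus (P \cup A_{[r]}) \hookrightarrow M \setminus (P \cup A_{[r]\setminus\{i\}})$ refills the single handle $A_i$ and, by general position in $M$, is $(m - q_{A_i} - 1)$-connected. Proposition \ref{vergleichmcfc} applied with $\delta = m-1$ then yields that $F$ is $(m-2)$-analytic with excess $1$.

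Since the codimension hypothesis $m - \kappa \geq 3$ implies $m - 2 > \kappa = \dim K$, the corollary following Theorem \ref{analytic} gives that the canonical map
\[
F(K) \rightarrow T_\infty F(K) = \holimsub{k} T_k F(K)
\]
is a weak equivalence. By construction, $T_k F(K) \simeq \holimsub{U \in \mathcal{O}k(K)} F(U)$, and the $\mathcal{O}k(K)$ form an increasing chain of full subcategories whose union is $\cup_k \mathcal{O}k(K)$. A standard interchange of homotopy limits for filtered unions of full subcategories then identifies $\holimsub{k} T_k F(K)$ with $\holimsub{U \in \cup_k \mathcal{O}k(K)} F(U)$, completing the proof.

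The main obstacle is the analyticity step. The identification $F(\intr(P \cup A_T)) \simeq M \setminus (P \cup A_T)$ requires careful control of the homotopy limit of $M \setminus C_n$ as $C_n \nearrow \intr(P \cup A_T)$, which depends on the collar structure of $\partial(P \cup A_T)$ in $M$. The connectivity estimate for the handle-refilling edge maps is also more delicate here than in the smooth-submanifold setting of \cite{occupants}, because the handles live in a simplicial complex embedded in $M$ rather than in a smooth submanifold; the codimension-$\geq 3$ hypothesis is used crucially so that the general-position estimates remain valid simplex by simplex.
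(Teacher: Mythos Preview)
Your proposal is correct and follows essentially the same route as the paper: replace $F(W_T)$ by $M\setminus W_T$ via a cofinality argument, check that the resulting cube is strongly cocartesian with edge maps of connectivity $(m-1)-q_{A_i}$, and then invoke Proposition~\ref{vergleichmcfc} and the convergence corollary to Theorem~\ref{analytic}. Two small slips to fix: in your Mayer--Vietoris display the union and intersection are interchanged (one has $(M\setminus(P\cup A_{S\cup\{i\}}))\cup(M\setminus(P\cup A_{S\cup\{j\}}))=M\setminus(P\cup A_S)$, with intersection $M\setminus(P\cup A_{S\cup\{i,j\}})$), and the codimension~$\geq 3$ hypothesis plays no role in the edge-connectivity estimate---it enters only afterwards, to guarantee $m-2>\kappa$ so that the tower converges.
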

\begin{proof}
Note that we have already shown that $F$ is good (Lemma \ref{Fisgood}). 
Let $P$ be a smooth compact codimension zero subobject of $K$ and let $A_{0},A_{1},...,A_{r}$ be compact codimension zero subobjects of $K\setminus \text{int}(P)$ with relative handle index $q_{A_{i}}$ (relative to $P$). For $T\subset [r]$, we define 
\[
W_{T}:= \text{int} (P \cup \bigcup_{i\in T} A_{i} )
\]
where $\text{int}(...)$ is the interior in $K$. We have to show that the cube
\[
T\mapsto F(W_{T}) \hspace{0,1cm},\hspace{0,3cm} T\subset [r]
\]
is strongly cocartesian and that for every $0\leq i\leq r$, the maps
\[
F\left(W_{[r]}\right) \rightarrow F \left(W_{[r]\setminus \left\{i\right\}}\right)
\] 
are $((m-1)-q_{A_{i}})$-connected where $m$ is the dimension of $M$. Note that $W_{S}$ is the interior of a compact codimension zero subobject of $K$. Therefore,
instead of using the functor $F$, we can work with the cube
\[
T \mapsto G(W_{T}) := M \setminus W_{T}
\]
Why can we use this cube? Because of the special assumption, there is a directed homotopy terminal subcategory in the category of all compact subsets of $W_{T}$. Thus, the canonical map $G(W_{T})\rightarrow F(W_{T})$ is a weak equivalence.    \\
Let $i,j\in [r]$ be two distinct elements. In order to show that the cube induced by $G$ is strongly cocartesian, we need to investigate if the canonical map from the homotopy pushout of
\[
G(W_{[r]\setminus \left\{i\right\}}) \leftarrow G(W_{[r]}) \rightarrow G(W_{[r]\setminus \left\{j\right\}})
\]
to $G(W_{\underline{r}\setminus \left\{i,j\right\}})$ is a weak equivalence. But this can easily be seen. In fact, using the assumptions that all $A_{i}$ are pairwise disjoint, we can find a copy of $G(W_{\underline{r}\setminus \left\{i,j\right\}})$ in the homotopy pushout which is a retract of the homotopy pushout.  
%
Likewise it is not difficult to check that for a fixed $i\in [r]$, the map 
\[
G(W_{[r]}) \rightarrow G(W_{[r]\setminus \left\{i\right\}})
\]
is $(m-q_{A_{i}}-1)$-connected since the target is homotopy equivalent to the source with attached cells of dimension $\geq (m-q_{A_{i}})$. 
\end{proof}

\section{Occupants in the interior of a manifold}\label{boundaryrecovered}

We discuss Theorem \ref{mainvariation} where the homotopy type of the boundary of a compact smooth manifold $M$ (with boundary) is recovered by the homotopy types of the spaces $M\setminus T$ with $T\subset M\setminus\partial M$ finite. To this end, we give the definition of a nice neighborhood of a simplicial complex and discuss first observations and examples. Then we prove the tube lemma (\ref{tubelemma}) which we will need to prove Theorem \ref{mainvariation}.

\subsection{Nice neighborhoods}

Let $M$ be a compact smooth manifold with boundary of dimension $m$.

\begin{defi}\label{niceproj}
Let $K\subset M\setminus \partial M$ be a simplicial complex. We say that $p:M\rightarrow K$ is a \textit{nice projection map} if the following conditions hold:
\begin{enumerate}
    \item $p|_{K} = \text{id}_{K}$
		\item The open set $p^{-1}(V(T,\rho))\setminus\partial M$ is diffeomorphic to $T\times \R^{m}$ for every element $(T,\rho)$ of the configuration category $\con(K)$ of $K$.
		\item The inclusion $\partial M \cup (M\setminus p^{-1}(V)) \rightarrow M\setminus V$ is a weak equivalence for all open sets $V\in\mathcal{O}(K)$. 
\end{enumerate}
\end{defi}

\begin{defi}\label{niceneighb}
We say that $M$ is a \textit{nice neighborhood} of a simplical complex $K\subset M$ if each (closed) simplex of $K$ is smoothly embedded in $M\setminus\partial M$ and if there exists a nice projection map $p:M\rightarrow K$.
\end{defi}

\begin{rmk}
In the second condition of Definition \ref{niceproj} we are just considering the interior of the manifold. But it follows from the definitions that $p^{-1}(V(T,\rho)) \simeq T$, i.e. each component of $p^{-1}(V(T,\rho))$ is contractible.
\end{rmk}

\begin{ex}\label{weaken}
(1) The definition of nice neighborhood weakens the strong condition in \cite[2.1.1]{occupants} in the following sense: Let $L$ be a smooth closed manifold and let $p : M\rightarrow L$ be a smooth disk bundle, i.e. a smooth fiberbundle where each fiber is diffeomorphic to a (closed) disk $D^{r}$ of fixed dimension $r\geq 0$. Then $L$ can be considered as a subset of $M$ by using the zero section of $p$. We can choose a triangulation of $L$ and then $M$ is a nice neighborhood of the triangulation of $L$. \\
(2) We consider the $1$-dimensional simplicial complex $K$ with four vertices $\left\{ a,b,c,d \right\}$ and $1$-simplices $\left\{ \left\{a,b\right\},\left\{a,c\right\},\left\{b,c\right\},\left\{b,d\right\},\left\{c,d\right\} \right\}$, i.e. we have two triangles which coincide in exactly one simplex, namely $\left\{b,c\right\}$. Now it is an easy exercise to build up a compact manifold $M$ of dimension $m = 2$ which is a nice neighborhood of $K$. We ought to consider $M$ as a manifold with with four $0$-handles and five $1$-handles. 
This example can easily be generalized with all dimensions $m\geq 2$ or/and with an one-dimensional simplicial complex which consists of more than two triangles.
\end{ex}

\begin{lem}\label{spaeter}
We assume that $M$ is a nice neighborhood of $K$ and that $\text{dim}(K)+3\leq m$. Let $p:M\rightarrow K$ be a nice projection map. Then the canonical map 
\[
\partial M \rightarrow \holimsub{(T,\rho)\in\con(K)} \partial M \cup (M \setminus p^{-1}(V_{K}(T,\rho )))
\]
is a weak equivalence.
\end{lem}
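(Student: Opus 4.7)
The plan is to fit the lemma into a naturality square built from Theorem~\ref{thetheorem} and condition~(3) of Definition~\ref{niceproj}. First, consider the natural transformation of contravariant functors on $\con(K)$ given by the inclusions $\iota(T,\rho)\co \partial M \cup (M\setminus p^{-1}(V_{K}(T,\rho))) \hookrightarrow M\setminus V_{K}(T,\rho)$. Condition~(3), applied pointwise (using $V_{K}(T,\rho)\in\mathcal{O}(K)$), says each $\iota(T,\rho)$ is a weak equivalence, so the induced map on homotopy limits is a weak equivalence. Applying the same condition to $V=K\in\mathcal{O}(K)$, together with the observation $p^{-1}(K)=M$ (because $p|_{K}=\id_{K}$ and $p$ is a retraction), yields a weak equivalence $\partial M = \partial M \cup (M\setminus p^{-1}(K)) \xrightarrow{\simeq} M\setminus K$.

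Next, Theorem~\ref{thetheorem} supplies a weak equivalence $M\setminus K \xrightarrow{\simeq} \holimsub{(T,\rho)\in\con(K)} M\setminus V_{K}(T,\rho)$. Strictly speaking, the theorem is stated for a boundaryless ambient manifold, so one applies it to $M\setminus\partial M$ (in which $K$ still sits in codimension $\geq 3$) and transports the result back to $M$ via a collar of $\partial M$. Since $K$ is compact and disjoint from $\partial M$, a single collar neighborhood of $\partial M$ that avoids $K$ simultaneously avoids every $V_{K}(T,\rho)$, so the resulting deformation retraction is natural in $(T,\rho)$ and produces a pointwise (hence homotopy-limit-wise) weak equivalence $(M\setminus\partial M)\setminus V_{K}(T,\rho) \simeq M\setminus V_{K}(T,\rho)$.

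Assembling these ingredients into the commutative square
\[
\xymatrix@M=6pt@R=17pt{
\partial M \ar[d]_{\simeq} \ar[r] & {\rule{0mm}{9mm}\holimsub{(T,\rho)\in\con(K)} \partial M \cup (M\setminus p^{-1}(V_{K}(T,\rho)))} \ar[d]^{\simeq} \\
M\setminus K \ar[r]^{\simeq} & {\rule{0mm}{6mm}\holimsub{(T,\rho)\in\con(K)} M\setminus V_{K}(T,\rho)}
}
\]
(which commutes because every arrow is induced by inclusions of subspaces of $M$), the two-out-of-three property for weak equivalences forces the top arrow, which is precisely the canonical map of the lemma, to be a weak equivalence. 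The main technical point will be the uniform collar identification used to transport Theorem~\ref{thetheorem} back from $M\setminus\partial M$ to $M$ naturally in $(T,\rho)$; but because $K$ is compact and lies at positive distance from $\partial M$, a single collar suffices for all objects of $\con(K)$, so this step is a routine verification.
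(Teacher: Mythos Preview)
Your proof is correct and follows essentially the same approach as the paper: the paper presents the argument as a chain of five weak equivalences $\partial M \simeq M\setminus K \simeq M\setminus(K\cup\partial M) \simeq \holim M\setminus(V_K(T,\rho)\cup\partial M) \simeq \holim M\setminus V_K(T,\rho) \simeq \holim \partial M\cup(M\setminus p^{-1}(V_K(T,\rho)))$, invoking condition~(3) of Definition~\ref{niceproj} for the outer steps, Theorem~\ref{thetheorem} for the middle, and ``$M\cong M\setminus\partial M$'' for the collar identifications, which is exactly your square unfolded. Your treatment of the collar step (naturality in $(T,\rho)$ via a single collar disjoint from the compact $K$) is in fact more explicit than the paper's one-line justification.
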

\begin{proof}
We consider the following five homotopy equivalences:
\begin{align*}
\partial M  &\simeq M\setminus K \\
&\simeq M\setminus (K\cup\partial M) \\
&\simeq \holim_{(T,\rho ) }  M \setminus (V_{K}(T,\rho )\cup \partial M) \\
&\simeq \holim_{(T,\rho ) }  M \setminus V_{K}(T,\rho ) \\
&\simeq \holim_{(T,\rho ) } \partial M \cup (M \setminus p^{-1}(V_{K}(T,\rho )))
\end{align*}
where the three homotopy limits are taken over all $(T,\rho)$ in $\con(K)$.
By definition of nice neighborhoods
, the first equivalence can be verified as well as
the fifth equivalence. By Theorem \ref{thetheorem}, the third map is a weak equivalence. The second and the fourth map are weak equivalences since $M\cong M\setminus \partial M$. 
\end{proof}

\subsection{Tube lemma}

Now we adapt the results of \cite[2.2]{occupants} for a nice projection map. 
Note that for the following lemma we do not have to require that the codimension is at least three. It could also be zero.

\begin{lem}\label{tubelemma}
Let $M$ be a compact, smooth manifold with boundary which is a nice neighborhood of a compact simplicial complex $K$. Let $p: M\rightarrow K$ be a nice projection map (Def. \ref{niceproj}). Then the canonical map
\begin{align}\label{tubemap}
\hocolimsub{(T,\rho)\in\con(K)} C_{n}(p^{-1}(V_{K}(T,\rho))\setminus \partial M) \rightarrow C_{n}(M\setminus \partial M)
\end{align}
is a weak equivalence.
\end{lem}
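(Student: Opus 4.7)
Set $U(T,\rho) := p^{-1}(V_K(T,\rho)) \setminus \partial M$ for brevity; by Definition~\ref{niceproj}(2), $U(T,\rho) \cong T \times \R^m$, and $\{U(T,\rho)\}_{(T,\rho) \in \con(K)}$ is an open cover of $M \setminus \partial M$. The plan is to adapt the proof of the analogous tube lemma in \cite[\S 2.2]{occupants} to our situation, with the nice projection map $p$ playing the role of the disk bundle projection used there. First I would realize the hocolim on the left side of (\ref{tubemap}) as the classifying space of the topological Grothendieck construction $\mathcal{G}$ with objects $((T,\rho),c)$, where $c \in C_n(U(T,\rho))$, and morphisms inherited from the order in $\con(K)$ (with a configuration $c$ transported along the inclusion $U(T,\rho) \hookrightarrow U(T',\rho')$). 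The canonical map (\ref{tubemap}) then becomes the forgetful $\mathcal{G} \to C_n(M \setminus \partial M)$, $((T,\rho),c) \mapsto c$, to be interpreted in the continuous setting of Section~\ref{conthomotlim}.

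The core of the argument is a homotopy-fiber analysis. Given a configuration $x = \{x_1,\dots,x_n\} \in C_n(M \setminus \partial M)$, set $Y := \{p(x_1),\dots,p(x_n)\} \subset K$, a finite subset of cardinality at most $n$. Since $x \subset U(T,\rho)$ if and only if $Y \subset V_K(T,\rho)$, the homotopy fiber of (\ref{tubemap}) at $x$ is (up to weak equivalence) the nerve of the topological full subcategory
\[
\con(K)_Y := \bigl\{(T,\rho) \in \con(K) : Y \subset V_K(T,\rho)\bigr\}.
\]
To show $B\con(K)_Y$ is contractible, I would consider the full subcategory $\mathcal{D}_Y \subset \con(K)_Y$ consisting of pairs of the form $(Y,\rho)$. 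Its space of objects is the open subset $A_Y \subset (0,\infty)^Y$ of admissible $\rho$'s, and $A_Y$ is downward closed under coordinatewise order (admissibility is preserved by coordinatewise shrinking), hence star-shaped toward the origin and contractible. Moreover $\mathcal{D}_Y$ is cofiltered via coordinatewise $\min$, making $B\mathcal{D}_Y$ contractible as well. For every $(T,\rho) \in \con(K)_Y$ the comma category $\mathcal{D}_Y \downarrow (T,\rho)$ is likewise a non-empty, cofiltered, star-shaped subspace of $A_Y$, so a topological version of Quillen's Theorem~A yields $B\con(K)_Y \simeq B\mathcal{D}_Y \simeq \ast$.

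With contractible homotopy fibers over every $x \in C_n(M \setminus \partial M)$, I would conclude via a topological Quillen Theorem~B applied to the continuous forgetful $\mathcal{G} \to C_n(M \setminus \partial M)$ that (\ref{tubemap}) is a weak equivalence. The hardest part will be the careful justification of the topological Theorems~A and~B for $\con(K)$ with its continuous structure; the argument must parallel the one in \cite[\S 2.2]{occupants} and rest on the continuous homotopy limit/colimit framework of Section~\ref{conthomotlim}, together with the observation that the nice projection map furnishes enough local trivializations for $\mathcal{G} \to C_n(M \setminus \partial M)$ to be sufficiently well-behaved on fibers.
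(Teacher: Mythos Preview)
Your proposal is correct and follows the same overall strategy as the paper: identify the strict fiber over a configuration $x$ with the nerve of the subposet $\{(T,\rho)\in\con(K):p(x)\subset V_K(T,\rho)\}$, show this is contractible via a cofinal directed subposet of shrinking balls around $Y=p(x)$, and then argue that strict fibers agree with homotopy fibers. The only real difference is the machinery used for the last step. The paper does not invoke a topological Quillen Theorem~B; instead it verifies directly that (\ref{tubemap}) is a \emph{microfibration} with contractible fibers, building an explicit small lift $H=(f_1,h)$ by holding the $|N\con(K)|$-coordinate fixed and moving only in $C_n(M\setminus\partial M)$, with the $\epsilon$ produced by a compactness argument over finitely many simplices of $|N\con(K)|$. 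For the contractibility of fibers the paper uses the homotopy initial directed subposet of pairs $(p(x),\rho)$ with $\rho$ constant equal to $1/n$, which is a special case of your $\mathcal{D}_Y$. So your Theorem~A/B framework packages the same content more abstractly; the paper's microfibration check is the concrete verification that would in any case underlie a topological Theorem~B here, and has the advantage that it avoids having to set up that machinery carefully for continuously indexed posets.
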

\begin{proof}
We are going to show that the map is a microfibration with contractible fibers.
Let $T$ be an element of the configuration space $C_{n}(M\setminus \partial M)$. It is obvious that the fiber of $T$ is the classifying space of all $(T,\rho)\in\con(K)$ with $T\in p^{-1}(V_{K}(T,\rho))\setminus\partial M$, i.e. $p(T)\in V_{K}(T,\rho)$. The inclusion of the directed poset 
\[
\left\{ (T,\rho)\in\con(K) \mid \exists n\in\N :\rho(t)=\frac{1}{n} \hspace{0,2cm}\text{for all}\hspace{0,2cm} t\in T \right\}
\]
into the above described poset is a homotopy initial functor. (We consider the posets as categories.) Therefore, the fiber is contractible. \\
Now we verify the lifting condition. We start with an observation: The projection map and the map (\ref{tubemap}) determine an injective, continuous map
\[
\hocolimsub{(T,\rho)\in\con(K)} C_{n}(p^{-1}(V_{K}(T,\rho))\setminus \partial M) \rightarrow \left| N\con(K) \right| \times C_{n}(M\setminus \partial M)
\]
(This map is not an embedding, i.e. homeomorphism onto its image. See also Remark \ref{notmetr}.)  We call this map $g=(g_{1},g_{2})$. \\
Let $Z$ be a compact CW-space. We consider the following diagram
\[
\xymatrix@M=6pt@R=17pt{
Z
\ar[d] \ar[r] & {\rule{0mm}{6mm}\hocolimsub{(T,\rho)\in \con(K)} C_{n}(p^{-1}(V_{K}(T,\rho))\setminus\partial M)}\ar[d] \\
Z \times I  \ar[r] &  C_{n}(M\setminus \partial M)
}
\]
We call the upper horizontal map $f$ and we can consider it as a pair of maps $f=(f_{1},f_{2})$ if we define $f_{i}:= g_{i}\circ f$, $i=1,2$. We call the bottom horizontal map $h$. The right vertical arrow is equal to $g_{2}$. We can define a small lift
\[
H: Z \times \left[ 0, \epsilon \right] \rightarrow \hocolimsub{(T,\rho)\in\con(K)} C_{k}(p^{-1}(V_{K}(T,\rho))\setminus \partial M)
\]
by $H:= (f_{1},h)$. \\
How can we describe the map $H$? Let $z\in Z$ be given. By the formula $H:= (f_{1},h)$, the map 
\[
\left\{z\right\} \times \left[ 0, \epsilon \right] \stackrel{H}{\longrightarrow} \hocolimsub{(T,\rho)\in\con(K)} C_{n}(p^{-1}(V_{K}(T,\rho))\setminus \partial M) \stackrel{g_{1}}{\longrightarrow} N\con (K)
\]
is constant, more precisely: $g_{1} \circ H(\left\{z\right\} \times \left[ 0, \epsilon \right] )=\left\{f_{1}(z)\right\}$. \\
How can we find an $\epsilon > 0$ such that $H$ is well-defined? Let $S$ be an $r$-simplex of $\left|N\con (K)\right|$, let $E$ be the corresponding open simplex and let $(T_{0},\rho_{0}) \leq ... \leq (T_{r},\rho_{r})$ be the corresponding element in $N_{r}\con(K)$. We define 
\begin{align*}
Z_{S}&:= f_{1}^{-1}(S) = f^{-1}(g_{1}^{-1}(S)) \subset Z \\
Z_{E}&:= f_{1}^{-1}(E) = f^{-1}(g_{1}^{-1}(E)) \subset Z
\end{align*}
We take a close view at the map
\[
f_{2}|_{Z_{S}}: Z_{S} \stackrel{f}{\longrightarrow} \hocolimsub{(T,\rho)\in\con(K)} C_{n}(p^{-1}(V_{K}(S,\rho))\setminus \partial M) \stackrel{g_{2}}{\longrightarrow} C_{n}(M\setminus \partial M)
\]
First, we note that $f_{2}(Z_{E}) \subset C_{n}(p^{-1}(V_{K}(T_{0},\rho_{0}))\setminus \partial M)$ by definition.
By definition (of nice neighborhood), $(p^{-1}(V_{K}(S_{j},\rho_{j}))\setminus \partial M)$ is a special open set for every $0\leq j\leq r$. In the spirit of Remark \ref{notmetr}, we conclude that $f_{2}(Z_{S})$ is also a subset of $C_{n}(p^{-1}(V_{K}(T_{0},\rho_{0}))\setminus \partial M)$. (For an easier example of this argument, see \cite[2.2.1]{occupants}.) Since $f_{2}(Z_{S}) = h (Z_{S} \times \left\{0\right\})$ is compact, there is an $\epsilon_{S} > 0$ with
\[
h\left( Z_{S} \times \left[ 0, \epsilon_{S} \right]\right) \subset C_{n}(p^{-1}(V_{K}(T_{0},\rho_{0}))\setminus \partial M)
\]
The image of $Z$ is contained in a finite union of open cells of $\left|N\con(K)\right|$. Therefore, there is a finite number of simplices $S$ such that $Z_{S}$ is nonempty. We can define $\epsilon$ to be the minimum of all $\epsilon_{S}$ where the minimum ranges over all simplices $S$ such that $Z_{S}$ is nonempty. 
\end{proof}

\begin{rmk}\label{notmetr}
Let $U\in\R^{n}$ be a bounded open subset. Then the mapping cylinder of the inclusion $U\rightarrow\R^{n}$ is not homeomorphic to a subspace of $\R^{m+1}$. The quotient topology equips the mapping cylinder with a different structure. In fact, it is not metrizable. 
\end{rmk}

\begin{cor}
The canonical map
\[
\hocolimsub{(T,\rho)\in\con(K)} N_{0}\con(p^{-1}(V_{K}(T,\rho))\setminus\partial M) \rightarrow N_{0}\con(M\setminus\partial M)
\]
determined by the inclusions is a weak equivalence.
\end{cor}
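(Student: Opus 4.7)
The strategy is to reduce the statement to the Tube Lemma (\ref{tubelemma}) by decomposing $N_{0}\con(-)$ as a disjoint union over cardinalities and then using the fact that the ``fat'' configuration space is weakly equivalent to the ordinary configuration space.

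First, observe that by definition $N_{0}\con(X)$, the space of objects of $\con(X)$, decomposes as the topological disjoint union
\[
N_{0}\con(X) = \coprod_{n \geq 0} C_{n}^{\fat}(X),
\]
where $C_{n}^{\fat}(X)$ is the subspace of pairs $(T,\rho)$ with $|T|=n$. As remarked earlier in the paper, the forgetful projection $C_{n}^{\fat}(X) \to C_{n}(X)$ is a fiber bundle with contractible fibers (the fiber over $T$ is the space of radius functions $\rho$ making $(T,\rho)$ an object of $\con(X)$, which is convex after one fixes a sufficiently small scale and hence contractible), so it is a weak equivalence. Applied to $X = p^{-1}(V_{K}(T,\rho))\setminus \partial M$ and to $X = M\setminus\partial M$, this yields natural weak equivalences identifying $N_{0}\con(X)$ with $\coprod_{n\geq 0} C_{n}(X)$.

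Next, since the homotopy colimit commutes with (small) disjoint unions and the decomposition above is natural in the open set $p^{-1}(V_{K}(T,\rho))\setminus\partial M$, one obtains a commutative square
\[
\xymatrix@M=6pt@R=17pt{
{\rule{0mm}{6mm}\hocolimsub{(T,\rho)\in\con(K)} N_{0}\con(p^{-1}(V_{K}(T,\rho))\setminus\partial M)} \ar[d]_{\simeq} \ar[r] & N_{0}\con(M\setminus\partial M) \ar[d]^{\simeq} \\
{\rule{0mm}{6mm}\coprod_{n\geq 0}\hocolimsub{(T,\rho)\in\con(K)} C_{n}(p^{-1}(V_{K}(T,\rho))\setminus\partial M)} \ar[r] & \coprod_{n\geq 0} C_{n}(M\setminus\partial M)
}
\]
in which the two vertical arrows are weak equivalences by the previous paragraph.

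It therefore suffices to show the bottom horizontal map is a weak equivalence, which amounts to checking this one component at a time. But for each fixed $n\geq 0$, the component of the bottom map is precisely the map appearing in the Tube Lemma~\ref{tubelemma}, which was shown there to be a weak equivalence. I do not anticipate any real obstacle: the only subtle step is the exchange of $\hocolim$ with the disjoint union, which is standard (disjoint union is a homotopy colimit and homotopy colimits commute), and the identification of $N_{0}\con(-)$ with $\coprod_{n} C_{n}(-)$ up to weak equivalence has already been recorded in the paper.
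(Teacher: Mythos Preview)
Your proof is correct and is essentially the paper's own argument: decompose $N_{0}\con(-)$ as $\coprod_{n} C_{n}^{\fat}(-)\simeq\coprod_{n} C_{n}(-)$, commute the homotopy colimit with the disjoint union, and invoke the Tube Lemma componentwise. The paper adds one cautionary remark that you do not make explicit, namely that for the left vertical arrow to be a weak equivalence one must take $\con(U)$ to consist of those $(T',\sigma)$ whose \emph{closed} thickening lies in $U$; since the Riemannian definition of the configuration category you are invoking already imposes this (the exponential map is required to be defined and regular on the closed disk), there is no gap in your argument.
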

\begin{proof}
We define $\con(U)$ to be the full subcategory of $\con(K)$ with all objects $(T,\rho)$ such that the (compact) closure of $V_{K}(T,\rho)$ is a subset of $U$. 
There is a commutative square
\[
\xymatrix@M=6pt@R=17pt{
{\rule{0mm}{6mm}\hocolimsub{(T,\rho)\in \con(K)} N_{0}\hspace{1mm}\con(p^{-1}(V_{K}(T,\rho))\setminus\partial M)}
\ar[d] \ar[r] & N_{0}\hspace{1mm}\con(M\setminus\partial M) \ar[d] \\
{\rule{0mm}{6mm}\hocolimsub{(T,\rho)\in \con(K)} \coprod_{n} C_{n}(p^{-1}(V_{K}(T,\rho))\setminus\partial M)}  \ar[r] &  \coprod_{n} C_{n}(M\setminus\partial M)
}
\]
where the vertical arrows are weak equivalences (the left one is induced by a natural transformation). Note: Here we need the assumption that the (compact) closure of $V_{K}(T,\rho)$ is a subset of $U$. If we defined $\con(U)$ to be the full subcategory of $\con(K)$ with all objects $(T,\rho)$ such that $V_{K}(T,\rho)$ is a subset of $U$, then in general the left vertical map would \textbf{not} be a weak equivalence. \\
Therefore, we only have to verify that the bottom map is a weak equivalence. But this follows from the fact that the homotopy colimit commutes with disjoint union. 
\end{proof}

\begin{cor}\label{tube}
For every $r\geq 0$, the canonical map
\[
\hocolimsub{(T,\rho)\in\con(K)} N_{r}\con(p^{-1}(V_{K}(T,\rho))\setminus\partial M) \rightarrow N_{r}\con(M\setminus\partial M)
\]
induced by the inclusions is a weak equivalence.
\end{cor}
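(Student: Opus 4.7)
The approach is to adapt the proof of Lemma \ref{tubelemma} to this situation, replacing the configuration space $C_{n}$ everywhere by the space $N_{r}\con$ of $r$-simplices of the nerve of the configuration category. The plan is to verify that the canonical map in question is a microfibration with weakly contractible fibers, and therefore a weak equivalence.

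First I would identify the fiber over a given $r$-chain $\gamma=((S_{0},\sigma_{0})\leq\ldots\leq(S_{r},\sigma_{r}))$ in $N_{r}\con(M\setminus\partial M)$. It should be the classifying space of the full subposet $\mathcal{D}\subset\con(K)$ consisting of those $(T,\rho)$ with $V(S_{r},\sigma_{r})\subset p^{-1}(V_{K}(T,\rho))$; equivalently, those $(T,\rho)$ for which the entire chain $\gamma$ lies in $\con(p^{-1}(V_{K}(T,\rho))\setminus\partial M)$. Exactly as in the proof of Lemma \ref{tubelemma}, the subposet of $\mathcal{D}$ given by those $(T,\rho)$ for which $\rho(t)=1/n$ for all $t\in T$ and some $n\in\N$ is directed, and its inclusion into $\mathcal{D}$ is homotopy initial; hence this classifying space is contractible.

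Next I would handle the microfibration property. For a compact CW-space $Z$ equipped with compatible maps $f\colon Z\to\hocolimsub{(T,\rho)\in\con(K)} N_{r}\con(p^{-1}(V_{K}(T,\rho))\setminus\partial M)$ and $h\colon Z\times I\to N_{r}\con(M\setminus\partial M)$, I would write $f=(f_{1},f_{2})$ as in the proof of Lemma \ref{tubelemma}, with $f_{1}\colon Z\to|N\con(K)|$ the nerve coordinate, and set $H(z,t):=(f_{1}(z),h(z,t))$. Since $f_{1}(Z)$ meets only finitely many open simplices of $|N\con(K)|$, it would suffice to check, for each corresponding closed simplex $S$ with bottom vertex $(T_{0},\rho_{0})$ and for $Z_{S}:=f_{1}^{-1}(S)$, that compactness of $Z_{S}$ and openness of $p^{-1}(V_{K}(T_{0},\rho_{0}))$ in $M$, combined with continuity of $h$, yield some $\epsilon_{S}>0$ with $h(Z_{S}\times[0,\epsilon_{S}])\subset N_{r}\con(p^{-1}(V_{K}(T_{0},\rho_{0}))\setminus\partial M)$. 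Setting $\epsilon:=\min_{S}\epsilon_{S}>0$ would complete the lift.

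The main obstacle will be exactly the one flagged in the proof of Lemma \ref{tubelemma}: because the hocolim does not embed as a subspace of the product $|N\con(K)|\times N_{r}\con(M\setminus\partial M)$ (cf.\ Remark \ref{notmetr}), the assertion that $f_{2}(Z_{S})\subset N_{r}\con(p^{-1}(V_{K}(T_{0},\rho_{0}))\setminus\partial M)$ needs the same careful verification as given there. Once that step is in hand, the compactness argument transposes verbatim and finishes the proof.
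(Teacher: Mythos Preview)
Your approach is genuinely different from the paper's, and it has a real gap. The paper does \emph{not} redo the microfibration argument for $N_r\con$. Instead it reduces to the case $r=0$ via the commutative square
\[
\xymatrix{
\hocolimsub{(T,\rho)} N_{r}\con(p^{-1}(V_{K}(T,\rho))\setminus\partial M) \ar[r]\ar[d] & N_{r}\con(M\setminus\partial M) \ar[d] \\
\hocolimsub{(T,\rho)} N_{0}\con(p^{-1}(V_{K}(T,\rho))\setminus\partial M) \ar[r] & N_{0}\con(M\setminus\partial M)
}
\]
whose vertical maps are the ultimate-target operators. One checks that this is a strict pullback and that the right-hand vertical map is a fibration; since the bottom arrow is a weak equivalence (previous corollary, which in turn reduces to $\coprod_n C_n$ and Lemma~\ref{tubelemma}) and $(Top)$ is right proper, the top arrow is a weak equivalence.

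The obstruction to your direct approach is that the fibers are \emph{not} all contractible---some are empty. Over a chain with terminal vertex $(S_r,\sigma_r)$, your poset $\mathcal{D}$ consists of those $(T,\rho)\in\con(K)$ with $\overline{V(S_r,\sigma_r)}\subset p^{-1}(V_K(T,\rho))\setminus\partial M$, equivalently $p(\overline{V(S_r,\sigma_r)})\subset V_K(T,\rho)$. But $p(\overline{V(S_r,\sigma_r)})$ is a compact set with genuine extent, not a finite set: when $\sigma_r$ is large the image of a single closed ball under $p$ can contain two vertices of $K$, hence lies in no open star and \emph{a fortiori} in no $V_K(T,\rho)$. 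In that case $\mathcal{D}=\emptyset$, so the map is not even surjective and the microfibration-with-contractible-fibers strategy cannot apply. (Even before this, your ``same directed subposet'' step already fails: for a compact set with extent the subposet $\{\rho\equiv 1/n\}$ need not be downward directed, as a one-dimensional example shows.) This is exactly why the paper passes through $N_0\con\to\coprod_n C_n$ first---forgetting $\sigma$ collapses the troublesome extent back to a finite set---and then pulls the result up to $N_r$ via the square above.
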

\begin{proof}
We consider the following commutative square:
\[
\xymatrix@M=6pt@R=17pt{
{\rule{0mm}{6mm}\hocolimsub{(T,\rho)\in \con(K)} N_{r}\hspace{1mm}\con(p^{-1}(V_{K}(T,\rho))\setminus\partial M)}
\ar[d] \ar[r] & N_{r}\hspace{1mm}\con(M\setminus\partial M) \ar[d] \\
{\rule{0mm}{6mm}\hocolimsub{(T,\rho)\in \con(K)} N_{0}\hspace{1mm}\con(p^{-1}(V_{K}(T,\rho))\setminus\partial M)}  \ar[r] &  N_{0}\hspace{1mm}\con(M\setminus\partial M)
}
\]
Here the vertical arrows are given by the ultimative target operator and the horizontal arrows are the canonical maps induced by the inclusions. We can check that this is a (strict) pullback square and that the right vertical arrow is a fibration. Since $(Top)$ is a proper model category \cite[13.1.11]{Hirschhorn} and the bottom arrow is weak equivalence, we conclude that the upper arrow is also a weak equivalence.
\end{proof}

\subsection{Boundary recovered}\label{formulation2}

Let $M$ be a manifold with boundary $\partial M$. We recover the homotopy type of $\partial M$ from the homotopy types of the spaces $M\setminus T$ where $T$ is a finite subset of $M\setminus \partial M$. Again, we need to allow thickenings of the finite subsets $T$ and inclusions between them. We recall that for each object $(T,\rho)$ in the configuration category $\con(M\setminus \partial M)$ of $M\setminus\partial M$, there is a corresponding open subset $V_{M\setminus\partial M}(T,\rho)$ in $M\setminus\partial M$. For simplicity, we will write $V(T,\rho)$ instead of $V_{M\setminus\partial M}(T,\rho)$. 
We can define a contravariant functor $\psi$ from $\con(M\setminus \partial M)$ to the category of topological spaces by $\psi ((T,\rho)) := M\setminus V(T,\rho)$. We get a canonical map
\begin{align}\label{boundary}
\partial M \rightarrow \holimsub{(T,\rho)\in \con(M\setminus\partial M)} M\setminus V(T,\rho) 
\end{align}
induced by the inclusions
$\partial M \rightarrow M\setminus V(T,\rho)$. We can ask if this map is a weak equivalence. There is also a variant with restricted cardinalities. Let $\con_{\leq n}(M\setminus \partial M)$ be the full subcategory of $\con(M\setminus \partial M)$ where the objects are all pairs $(T,\rho)\in  \con(M\setminus \partial M)$ with $\left|T\right|\leq n$. Again, we get a canonical map
\begin{align}\label{bres}
\partial M \rightarrow \holimsub{(T,\rho)\in \con_{\leq n}(M\setminus\partial M)} M\setminus V(T,\rho)
\end{align}
induced by inclusions. We can ask wether this map is highly connected and wether there is a lower bound for the connectivity. The following theorem where we use again the notations $\kappa:=\text{dim}(K)$ and $m:=\text{dim}(M)$ answers these questions.

\begin{thm}\label{generalization}
The canonical map (\ref{boundary}) is a weak equivalence if the following condition holds: There is a compact simplicial complex $K\subset M$ of dimension $\kappa$ with $\kappa+3\leq m$ such that $M$ is a \textit{nice neighborhood} (Def. \ref{niceneighb}) of $K$. In this case, the canonical map (\ref{bres}) is $1+(n+1)(m-\kappa-2)$-connected.
\end{thm}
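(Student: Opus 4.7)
The plan is to reduce the statement to Lemma~\ref{spaeter} by rewriting the homotopy limit over $\con(M\setminus\partial M)$ as an iterated homotopy limit whose outer index is $\con(K)$. Let $\psi$ be the functor $(T,\rho)\mapsto M\setminus V(T,\rho)$. By Lemma~\ref{holimcomp} and its variant with restricted cardinalities, we may work with continuous homotopy limits throughout, realising the continuous homotopy limit of $\psi$ over $\con(M\setminus\partial M)$ as the totalization of the Reedy fibrant cosimplicial space $[r]\mapsto\Gamma_r(\psi)$. Writing $U_{(S,\sigma)}:=p^{-1}(V_K(S,\sigma))\setminus\partial M$ for $(S,\sigma)\in\con(K)$, Corollary~\ref{tube} provides a weak equivalence $\hocolimsub{(S,\sigma)\in\con(K)} N_r\con(U_{(S,\sigma)})\to N_r\con(M\setminus\partial M)$, natural in $r$. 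Passing to continuous section spaces, for which $E^!_r(\psi)\to N_r\con(M\setminus\partial M)$ is a fibre bundle, and using properness of $(Top)$ together with the standard identification of sections over a hocolim with a holim of sections, gives a levelwise weak equivalence
\[
\Gamma_r(\psi)\;\simeq\;\holimsub{(S,\sigma)\in\con(K)}\Gamma_r\bigl(\psi|_{\con(U_{(S,\sigma)})}\bigr),
\]
and, after passing to totalizations and swapping $\Tot$ with the outer holim,
\[
\ctsholimsub{\con(M\setminus\partial M)}\psi\;\simeq\;\ctsholimsub{(S,\sigma)\in\con(K)}\;\ctsholimsub{\con(U_{(S,\sigma)})}\psi.
\]

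Next I compute the inner continuous homotopy limit for fixed $(S,\sigma)\in\con(K)$. Since $U_{(S,\sigma)}$ is diffeomorphic to $S\times\R^m$ by the second clause of Definition~\ref{niceproj}, one can pick a sequence $(T_n,\rho_n)\in\con(U_{(S,\sigma)})$ with $V(T_n,\rho_n)\subsetneq V(T_{n+1},\rho_{n+1})$ and $\bigcup_n V(T_n,\rho_n)=U_{(S,\sigma)}$. The inclusion of this $\N$-indexed subposet is homotopy cofinal for the inverse limit direction, because for every $(T,\rho)\in\con(U_{(S,\sigma)})$ the compact closure $\overline{V(T,\rho)}$ is contained in $U_{(S,\sigma)}$ and is therefore eventually swallowed by $V(T_n,\rho_n)$. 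Combined with the tower argument from the proof of Lemma~\ref{Fisgood}, this yields
\[
\ctsholimsub{\con(U_{(S,\sigma)})} M\setminus V(T,\rho)\;\simeq\;\holimsub{n} M\setminus V(T_n,\rho_n)\;\simeq\; M\setminus U_{(S,\sigma)}\;=\;\partial M\cup\bigl(M\setminus p^{-1}(V_K(S,\sigma))\bigr).
\]

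Substituting into the previous display and invoking Lemma~\ref{spaeter} gives
\[
\ctsholimsub{\con(M\setminus\partial M)}\psi\;\simeq\;\ctsholimsub{(S,\sigma)\in\con(K)}\partial M\cup\bigl(M\setminus p^{-1}(V_K(S,\sigma))\bigr)\;\simeq\;\partial M,
\]
and naturality identifies the composite with the canonical map~(\ref{boundary}), proving the first assertion. For the restricted version, the same argument carried out with $\con_{\leq n}$ in place of $\con$ throughout reduces matters to the restricted form of Lemma~\ref{spaeter}, which in turn inherits the bound $1+(n+1)(m-\kappa-2)$ from the restricted form of Theorem~\ref{thetheorem}. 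The most delicate step I expect is the hocolim/section-space exchange in the first paragraph: it requires verifying that the tube-lemma weak equivalences of Corollary~\ref{tube} assemble into a levelwise weak equivalence of Reedy fibrant cosimplicial diagrams, compatibly with their cosimplicial structure maps.
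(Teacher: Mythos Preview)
Your argument is essentially the paper's own: the paper also organizes the proof as a commutative square whose top edge is Lemma~\ref{spaeter}, whose bottom edge is the section-space comparison driven by Corollary~\ref{tube} (this is exactly the hocolim/section-space exchange you flag as delicate, and the paper handles it just as you anticipate, by identifying the target with the totalization of $[r]\mapsto\Gamma\bigl(\hocolim_{(S,\sigma)}N_r\con(U_{(S,\sigma)}),E^!_r(\psi)\bigr)$), and whose right edge is the identification of the inner homotopy limit over $\con(U_{(S,\sigma)})$ with $M\setminus U_{(S,\sigma)}$.

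The one place you diverge is in that inner identification. The paper (Lemma~\ref{rightarrow}) routes it through the good functor $F$ and the fact that $U_{(S,\sigma)}$ is itself a maximal object in $\cup_k\mathcal{O}k(U_{(S,\sigma)})$, whereas you argue via a cofinal exhausting chain $(T_n,\rho_n)$. Your variant is more elementary but has two small imprecisions. First, the cofinality step relies on $\overline{V(T,\rho)}\subset U_{(S,\sigma)}$ for every $(T,\rho)$ in the index, but the indexing category actually in play (see Remark~\ref{resholimcomp} and the lower-right corner of the paper's square) only imposes $V(T,\rho)\subset U_{(S,\sigma)}$, so an extra cofinality comparison between these two subposets is needed. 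Second, the citation of Lemma~\ref{Fisgood} does not supply the step $\holim_n M\setminus V(T_n,\rho_n)\simeq M\setminus U_{(S,\sigma)}$; what you need instead is the direct (easy) observation that each inclusion $M\setminus U_{(S,\sigma)}\hookrightarrow M\setminus V(T_n,\rho_n)$ is a deformation retract, using that $U_{(S,\sigma)}\cong S\times\R^m$ and the $V(T_n,\rho_n)$ are concentric balls in each component. The paper's $F$-functor route sidesteps both issues.
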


\begin{rmk}
In (\ref{boundary}) and (\ref{bres}), the discrete (or ordinary) homotopy limit can be replaced by the continuous homotopy limit without changing the (weak) homotopy type. This can be justified with arguments which are provided in \cite[1.2]{occupants} (and in section 2.2).
\end{rmk}

\begin{rmk}
This theorem is a generalization of \cite[Thm. 2.1.1]{occupants}, compare Example \ref{weaken} (2). It can be applied in the proof of \cite[Thm. 5.2.1 and 5.3.1]{confcat} whereby we get a weaker condition in these theorems (this will extensively be studied in section \ref{aplli}). 
\end{rmk}
In order to prove that (\ref{boundary}) is a weak equivalence, we consider the following diagram where all arrows are the canonical maps and $p:M\rightarrow K$ is a nice projection map:
\[
\xymatrix@M=6pt@R=17pt{
\partial M
\ar[d] \ar[r] & {\rule{0mm}{6mm}\holimsub{(T,\rho)\in \con(K)} \partial M\cup\big(M\setminus p^{-1}(V_K(T,\rho))\big)}\ar[d] \\
{\rule{0mm}{6mm}\holimsub{(T',\sigma)\in \con(M\setminus\partial M)}\psi(T',\sigma)}  \ar[r] &  {\rule{0mm}{9mm}\holimsub{(T,\rho)\in \con(K)} \!\!\!\!\!\!
\holimsub{\twosub{(T',\sigma)\in \con (M\setminus\partial M)}{p(V(T',\sigma))\subset V_{K}(T,\rho)}} \psi(T',\sigma)} 
}
\]
%
%
It commutes because both compositions factorize through the ordinary limit. In Lemma \ref{spaeter} we have already shown that the upper horizontal arrow is a weak equivalence. Therefore, the first part of the theorem follows from the next two lemmata.

\begin{lem}\label{rightarrow}
The right vertical arrow is a weak equivalence.
\end{lem}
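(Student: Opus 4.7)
The plan is to reduce the global assertion to a pointwise one, and then exploit the structure of the nice neighborhood together with a cofinality argument. By the standard homotopy-limit comparison (e.g.~\cite[Theorem 6.14]{Dugger}, applied as in the proof of Lemma \ref{vergleich}), it suffices to show that for each fixed $(T,\rho)\in\con(K)$ the natural map
\[
\partial M \cup (M\setminus p^{-1}(V_{K}(T,\rho))) \,\longrightarrow\, \holimsub{\twosub{(T',\sigma)\in \con(M\setminus\partial M)}{V(T',\sigma)\subset W}} M\setminus V(T',\sigma)
\]
is a weak equivalence, where $W:=p^{-1}(V_{K}(T,\rho))\setminus\partial M$. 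The source coincides with $M\setminus W$.

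By condition (2) of Definition \ref{niceproj}, $W$ is diffeomorphic to $T\times\R^{m}$, so it is a disjoint union of $|T|$ contractible open submanifolds of $M\setminus\partial M$. Hence for every compact subset $C\subset W$ one can cover $C$ componentwise by finitely many disjoint Riemannian geodesic disks that still lie in $W$, producing a configuration $(T',\sigma)\in\con(M\setminus\partial M)$ with $C\subset V(T',\sigma)\subset W$. Consequently the poset of configurations $(T',\sigma)$ with $V(T',\sigma)\subset W$ is cofinal in the directed poset of compact subsets of $W$, ordered by inclusion.

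I would then imitate the strategy of Lemma \ref{vergleich}. Let $\tilde F$ denote the analogue of the functor $F$ of Lemma \ref{Fisgood} but defined on open subsets of $M\setminus\partial M$, namely $\tilde F(U):=\holimsub{C\subset U} M\setminus C$ with $C$ running over compact subsets. The map to be analyzed fits into the commutative square
\[
\xymatrix@M=6pt@R=17pt{
M\setminus W \ar[r] \ar[d] & {\rule{0mm}{6mm}\holimsub{(T',\sigma)} M\setminus V(T',\sigma)} \ar[d] \\
\tilde F(W) \ar[r] & {\rule{0mm}{6mm}\holimsub{(T',\sigma)} \tilde F(V(T',\sigma))}
}
\]
in which the inner homotopy limits are over $(T',\sigma)$ with $V(T',\sigma)\subset W$. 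The vertical maps are weak equivalences because $W$, respectively each $V(T',\sigma)$, admits a homotopy-terminal countable cofinal sequence of compact subsets (compare the treatment of the vertical arrows in the proof of Theorem \ref{thetheorem}). The bottom horizontal map is a weak equivalence by the cofinality statement of the previous paragraph, once the homotopy limit is reinterpreted as the continuous homotopy limit of Section \ref{conthomotlim} (cf.~Remark \ref{resholimcomp}). Chasing the square yields the required top horizontal weak equivalence.

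The main obstacle will be the bottom horizontal weak equivalence: although the cofinality of configurations in compact subsets of $W$ is intuitively clear from the Euclidean structure on each component $W_{t}\cong\R^{m}$, converting it into an honest weak equivalence of (continuous) homotopy limits requires the Reedy-fibrancy and continuous-section machinery of Section \ref{conthomotlim}, exactly in the spirit of the passage from $\cup_{k}\mathcal{O}k(K)$ to $\con(K)$ effected in Lemma \ref{vergleich}.
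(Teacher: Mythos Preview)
Your overall plan---reduce to a pointwise statement over $\con(K)$ and then analyse a commutative square with the functor $\tilde F$ in place of $V\mapsto M\setminus V$---is precisely what the paper does. Two points need correction.

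First, a minor one: the right vertical arrow is a map of homotopy limits over the \emph{same} index category $\con(K)$, induced by a natural transformation. The reduction to a pointwise check is just the homotopy invariance of $\holim$, not \cite[Theorem 6.14]{Dugger} (which is about changing the index category). This is harmless.

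Second, and this is a genuine gap: your cofinality claim is false. You assert that every compact $C\subset W$ is contained in some $V(T',\sigma)\subset W$. But $V(T',\sigma)$ is a \emph{disjoint} union of Riemannian geodesic balls in $M\setminus\partial M$, so a connected compact set covered by $V(T',\sigma)$ must lie in a single such ball. The diffeomorphism $W_t\cong\R^m$ guaranteed by Definition~\ref{niceproj} has no reason to respect the ambient Riemannian metric; $W_t$ may well be long and thin, so that a compact arc running along it cannot fit into any geodesic ball of $M$ still contained in $W_t$. Thus the poset of configurations is \emph{not} cofinal in the compact subsets of $W$, and the bottom arrow cannot be handled this way.

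The paper avoids this by inserting an intermediate step: the bottom map is factored as
\[
\tilde F(W)\;\longrightarrow\;\holimsub{W'\in\cup_k\mathcal{O}k(W)}\tilde F(W')\;\longrightarrow\;\holimsub{(T',\sigma)}\tilde F(V(T',\sigma)).
\]
The first arrow is a weak equivalence because $W\cong T\times\R^m$ is itself a maximal object of $\cup_k\mathcal{O}k(W)$. The second arrow is then handled by the continuous-homotopy-limit argument of Lemma~\ref{vergleich}, which you correctly flag in your last paragraph---but that argument is not a repackaging of cofinality; it \emph{replaces} the cofinality you were hoping for.
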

\begin{proof}
Let $(T,\rho)\in \con(K)$ be fixed. Since the map under investigation is induced by a natural transformation, it suffices to show that the map 
\begin{align*}
\partial M \cup (M \setminus p^{-1}(V_{K}(T,\rho ))) = M\setminus U \hspace{0,2cm} \rightarrow \hspace{0,1cm}
 \holimsub{(T',\sigma )\in\con(U)} \psi (T',\sigma )
\end{align*}
is a weak equivalence where for simplicity, $U$ is defined to be the open set
\[
U := p^{-1}(V_{K}(T,\rho ))\setminus\partial M \hspace{0,3cm} \subset \hspace{0,3cm} M\setminus \partial M
\]
Note that by definition, the open set $U$ is diffeomorphic to $T\times \R^{m}$. We consider the following compostion of maps:
\begin{align*}
M\setminus U \hspace{0,2cm}
\rightarrow \hspace{0,1cm} \holimsub{(T',\sigma)\in\con(U)} \psi (T',\sigma ) \hspace{0,2cm}
\rightarrow \hspace{0,1cm}  \holimsub{(T',\sigma)\in\con(U)} F (V_{K}(T',\sigma )) 
\end{align*}
where $F$ is the functor from the category $\mathcal{O}(U)$ of open subsets of $U$ to topological spaces given by $F(W) := \holim_{C\subset W }\hspace{0,1cm} M\setminus C$ where $C$ runs through the compact subsets of $W$. Note that the category of all compact subsets of $V_{K}(T',\sigma)$ has a directed subcategory which is homotopy terminal. Therefore, the canonical map $\psi (T',\sigma ) \rightarrow  F (V_{K}(T',\sigma )) $ is a weak equivalence for every $(T',\sigma)\in\con(U)$. Using the homotopy invariance of the homotopy limit, the second map is a weak equivalence. So, in order to prove that the first map is a weak equivalence, we have to show that the composition is a weak equivalence. To this end, we consider another composition
\begin{align*}
M\setminus U \hspace{0,1cm}
\rightarrow \hspace{0,1cm} F(U) \hspace{0,1cm}
\rightarrow    \holimsub{W\in\cup_{k}\mathcal{O}k(U)} F (W) \hspace{0,1cm}
\rightarrow   \holimsub{(T',\sigma)\in\con(U)} F (V_{K}(T',\sigma ))
\end{align*}
First of all, we note that the two compositions give the same map since both compositions factorize through the ordinary limit. The first map in this composition is a weak equivalence because the category of all compact subsets of $U$ has a directed subcategory which is homotopy terminal. The third map is a weak equivalence by an arguments which we have seen in Lemma \ref{vergleich}. The second map is a weak equivalence because the open set $U$ is a maximal element in $\cup_{k}\mathcal{O}k(U)$. 
\end{proof}

\begin{lem}
The bottom horizontal arrow is a weak equivalence.
\end{lem}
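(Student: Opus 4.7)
The plan is to reduce the claim to the tube lemma (Corollary \ref{tube}) via a Fubini-type identity for homotopy limits. By Lemma \ref{holimcomp} and Remark \ref{resholimcomp} we may replace every homotopy limit in sight by its continuous analogue. Write $U(T,\rho) := p^{-1}(V_{K}(T,\rho))\setminus\partial M$, and note that the full subcategory of $\con(M\setminus\partial M)$ consisting of those $(T',\sigma)$ with $p(V(T',\sigma)) \subset V_{K}(T,\rho)$ is precisely $\con(U(T,\rho))$ in the sense of Section \ref{confcat}.

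Next, express each side as a totalization of a Reedy-fibrant cosimplicial space of continuous sections. The left hand side becomes the totalization of $[r]\mapsto \Gamma_{r}(\psi)$, where $\Gamma_{r}(\psi)$ is the space of continuous sections of the projection $E^{!}_{r}(\psi)\to N_{r}\con(M\setminus\partial M)$ whose fibre over a chain $(T'_{0},\sigma_{0})\leq\cdots\leq(T'_{r},\sigma_{r})$ is $M\setminus V(T'_{r},\sigma_{r})$. Since totalization commutes with (continuous) homotopy limits, the right hand side becomes the totalization of $[r]\mapsto\ctsholimsub{(T,\rho)\in\con(K)}\Gamma_{r}(\psi)|_{U(T,\rho)}$. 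It therefore suffices, for every $r\geq 0$, to show that the canonical map
\[
\Gamma_{r}(\psi)\ \longrightarrow\ \ctsholimsub{(T,\rho)\in\con(K)}\Gamma_{r}(\psi)|_{U(T,\rho)}
\]
is a weak equivalence.

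This is the point at which Corollary \ref{tube} enters: at every $r\geq 0$ it furnishes a weak equivalence $\hocolimsub{(T,\rho)\in\con(K)} N_{r}\con(U(T,\rho))\to N_{r}\con(M\setminus\partial M)$. Since $\Gamma_{r}(\psi)$ is the space of continuous sections of a (micro)fibration over this base, and the section functor sends homotopy colimits of base spaces to homotopy limits of section spaces, the equivalence of bases propagates to the required equivalence of continuous section spaces.

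The hard part will be making this last step — ``sections over a homotopy colimit equal the homotopy limit of section spaces'' — precise in the present topologised and parameter-dependent setting, uniformly in $r$. The cleanest route is to pass to the Grothendieck construction of the functor $(T,\rho)\mapsto \con(U(T,\rho))$: call it $\mathcal{G}$. The tube lemma identifies $N_{r}\mathcal{G}$ with $N_{r}\con(M\setminus\partial M)$ degreewise up to weak equivalence, and the fibration and Reedy-fibrancy conditions already verified in the proofs of Lemma \ref{holimcomp} and Corollary \ref{tube} allow this equivalence of base nerves to descend to the desired weak equivalence of continuous homotopy limits of $\psi$.
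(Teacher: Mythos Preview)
Your approach is essentially the one the paper takes: pass to continuous homotopy limits, rewrite both sides as totalizations of Reedy-fibrant cosimplicial section spaces, and then invoke Corollary~\ref{tube} (the tube lemma) degreewise. The only organizational difference is that the paper skips your intermediate step: rather than first writing the target as $\Tot\big([r]\mapsto \ctsholimsub{(T,\rho)}\Gamma_r(\psi)|_{U(T,\rho)}\big)$ and then arguing that a continuous homotopy limit of section spaces agrees with sections over the homotopy colimit of bases, the paper directly identifies the target (after making only the \emph{inner} homotopy limit continuous) with $\Tot\big([r]\mapsto\tilde\Gamma_r(\psi)\big)$, where $\tilde\Gamma_r(\psi)$ is by definition the space of sections of $E^{!}(\psi)$ over $\hocolimsub{(T,\rho)\in\con(K)} N_r\con(U(T,\rho))$. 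This identification is asserted as an isomorphism, coming from the fact that the outer (discrete) homotopy limit and the totalization are both genuine limits and hence commute, together with the universal property of the homotopy colimit as a quotient space. With that in hand, the map under study is simply precomposition with the weak equivalence of Corollary~\ref{tube}, and one is done. Your Grothendieck-construction detour is not wrong, but it reproves what the paper takes as a formal identity.
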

\begin{proof}
If replace the homotopy limit by the continuous homotopy limit, the source is the totalization of the cosimplicial space $\left[r\right]\mapsto \Gamma_{r}(\Psi)$ where $\Gamma_{r}(\Psi)$ is the space of all sections from $N_{r}\mathcal{P}(M\setminus\partial M)$ to $E^{!}(\Psi)$. (All notations are introduced in section \ref{conthomotlim}.) If replace the second homotopy limit in the target by the continuous homotopy limit (compare Remark \ref{resholimcomp}), the target is isomorphic to the totalization of the cosimplicial space $\left[r\right]\mapsto \tilde{\Gamma}_{r}(\Psi)$ where $\tilde{\Gamma}_{r}(\Psi)$ is the space of all sections from 
\[
\hocolimsub{(T,\rho)\in\con(K)} N_{r}\hspace{0,1cm}\con(p^{-1}(V_{K}(T,\rho))\setminus\partial M)
\]
to $E^{!}(\Psi)$. The bottom horizontal arrow in the above diagram is induced by composition with the map in Corollary \ref{tube}:
\[
\hocolimsub{(T,\rho)\in\con(K)} N_{r}\hspace{0,1cm}\con(p^{-1}(V_{K}(T,\rho))\setminus\partial M)\rightarrow N_{r}\hspace{0,1cm}\con(M\setminus\partial M) \rightarrow E^{!}(\Psi)
\]
Using Corollary \ref{tube}, this map is a weak equivalence.
\end{proof}

Now we investigate the case with restricted cardinalities. To this end, we fix $n\geq 0$. 
Let $j$ be an integer with $0\leq j\leq n$ be given. There is the following modification of the tube lemma (\ref{tubelemma}). The canonical map
\[
\hocolimsub{(T,\rho)\in\con_{\leq n}(K)} C_{j}(p^{-1}(V_{K}(T,\rho))\setminus\partial M) \rightarrow C_{j}(M\setminus\partial M)
\]
is a weak equivalence. The proof is the same: The projection map is a microfibration with contractible fibers. Why do we need that $j\leq n$? In the proof of Lemma \ref{tubelemma} we introduced a homotopy initial subposet, in order to show that the fibers are contractible. In the restricted case, this poset is defined if and only if $j\leq n$. \\
Using this observation, the proof of the restricted case follows similar lines. In particular, there is a commutative diagram
\[
\xymatrix@M=3pt@R=17pt{
\partial M
\ar[d] \ar[r] & {\rule{0mm}{6mm}\holimsub{(T,\rho)\in \con_{\leq n}(K)} \partial M\cup\big(M\setminus p^{-1}(V_K(T,\rho))\big)}\ar[d] \\
{\rule{0mm}{6mm}\holimsub{(T',\sigma)\in \con_{\leq n}(M_{-})}\psi(T',\sigma)}  \ar[r] &  {\rule{0mm}{9mm}\holimsub{(T,\rho)\in \con_{\leq n}(K)} \!\!\!\!\!\!
\holimsub{\twosub{(T',\sigma)\in \con_{\leq n} (M_{-})}{p(V(T',\sigma))\subset V_{K}(T,\rho)}} \psi(T',\sigma)} 
}
\]
where $M_{-} := M\setminus \partial M$.
%
%
By Theorem \ref{thetheorem} (and Lemma \ref{spaeter}), the top horizontal map is $1+(n+1)(m-k-2)$-connected. Using a modification of Corollary \ref{tube}, the bottom horizontal arrow is a weak equivalence. In order to justify that the right vertical arrow is a weak equivalence, we can use arguments which we have seen in Lemma \ref{rightarrow}.

\section{Homotopy automorphisms}\label{aplli}

Let $M$ be a smooth, compact manifold with boundary.

\begin{defi}\label{holink}
We define the \textit{homotopy link} $\text{holink}(M / \partial M,*)$ of the base point in $M / \partial M$ to be the space of paths $\gamma:\left[ 0,1 \right] \rightarrow M / \partial M$ which satisfy the condition $\gamma^{-1}(\left\{*\right\}) = \left\{ 0 \right\}$. The topology is the compact-open topology. 
We define the map 
\[
q_{M} : \text{holink}(M / \partial M,*) \rightarrow M \setminus \partial M
\]
by $\gamma\mapsto \gamma(1)$.
\end{defi}

\begin{rmk}
It is well-known that the map $q_{M}$ is a good homotopical substitute for the inclusion map $\partial M\hookrightarrow M$: If we define $Z_{M}$ to be the space of paths $\gamma:\left[ 0,1 \right] \rightarrow M$ which satisfy the condition $\gamma^{-1}(\partial M) = \left\{ 0 \right\}$ (with the compact-open topology), we get a homotopy commutative diagram 
\[
\xymatrix@M=6pt@C=12pt{
\holink(M/\partial M,\star) \ar[rr]^-{q_M} && M\setminus\partial M \ar[d]^-\simeq \\
Z_M \ar[u]^-\simeq \ar[r]^-\simeq & \partial M \ar[r] ^-{\textup{incl.}} & M
}
\]
Let $\homeo (M)$ be the homeomorphism group of $M$. Evidently, there is a canonical action of $\homeo (M)$ on the complete diagram. This action extends to an action of the homeomorphism group $\homeo (M\setminus\partial M)$ on $q_{M}$. But unfortunately, the action does not extend to an action of the homeomorphism group $\homeo (M\setminus\partial M)$ on the inclusion map $\partial M\hookrightarrow M$. We are interested in this extension. That is why we introduced the homotopical substitute $q_{M}$.
\end{rmk}

\begin{defi}
Let $c$ be an object in a model category $\mathcal{C}$. We define $\haut (c)$ to be the \textit{space of derived homotopy automorphisms} of $c$ in $\mathcal{C}$, i.e. $\haut (c)$ is the union of the homotopy invertible path components of the derived mapping space $\R \text{map} (c,c)$. With composition $\haut (c)$ is a grouplike topological or simplicial monoid. (For a suitable definition of simplicial mapping spaces, we follow \cite{DwyerKan}.)
\end{defi}

Note that the map $q_{M}$ can be regarded as a functor from the totally ordered set $\left\{ 0,1 \right\}$ to the category of topological spaces. The category of such functors has well-known standard model category structures. If we choose one of them, we can study the space of derived homotopy automorphisms $\haut (q_{M})$ of $q_{M}$. In particular, since $\homeo (M\setminus\partial M)$ acts on $q_{M}$, each homeomorphism of $M\setminus\partial M$ determines a (derived) homotopy automorphism of $q_{M}$. Therefore, we get a map
\begin{align}\label{Bmap1}
B \homeo (M\setminus \partial M) \rightarrow B \haut (q_{M})
\end{align}
of classifying spaces. \\

Let $\Fin$ be the category of finite sets and maps between them. The nerve $N\Fin$ is a simplicial set. We introduced the Riemannian model of the configuration category $\con(M\setminus\partial M)$. The nerve of this category is a simplicial space over $N\Fin$. 

\begin{defi}
Let $X$ be a simplicial space over $N\Fin$. We define $\haut_{N\Fin} (X)$ to be the \textit{space of derived homotopy automorphisms} of $X$ over $N\Fin$, i.e. $\haut (X)$ is the union of the homotopy invertible path components of the derived mapping space $\R \text{map}_{N\Fin} (X,X)$ of $X$ over $N\Fin$. (If an introduction to derived mapping spaces of simplicial spaces is needed, we refer the reader to \cite[\S 3]{confcat}). With composition $\haut_{N\Fin} (X)$ is a grouplike topological or simplicial monoid.
\end{defi}

If we use the particle model \cite[3.1]{pedromichael}, \cite[\S 1]{confcat} of the configuration category $\con(M\setminus\partial M)$, it is easy to see that each homeomorphism of $M\setminus\partial M$ determines a (derived) homotopy automorphism of the nerve of $\con(M\setminus\partial M)$ over $N\Fin$. \\
\textit{Particle model:} In this model the space of objects of the configuration category $\con(M\setminus\partial M)$ is 
\[
\coprod_{k\geq 0} \emb(\underline{k},M\setminus\partial M) 
\]
A morphism from $f\in\emb(\underline{k},M\setminus\partial M)$ to $g\in\emb(\underline{l},M\setminus\partial M)$ is a map $v:\underline{k}\rightarrow \underline{l}$ and a homotopy \[
(\gamma_{t})_{t\in\left[ 0,a \right]}:\underline{k} \rightarrow M\setminus\partial M
\]
from $f$ to $gv$ which satisfies the \textit{stickiness condition}: if $\gamma_{s}(b_{1}) = \gamma_{s}(b_{2})$ for $s\in\left[ 0, a \right]$ and $b_{1},b_{2}\in \underline{k}$, then $\gamma_{t}(b_{1}) = \gamma_{t}(b_{2})$ for all $t\in \left[ s,a \right]$. Therefore, the space of morphisms of the configuration category $\con(M\setminus\partial M)$ in the particle model is 
\[
\coprod_{k,l\geq 0, \hspace{0,1cm}v:\underline{k}\rightarrow \underline{l}} \Lambda(v)
\]
Here $\Lambda(v)$ is the space of all triples $(f,g,\gamma)$ where $f\in\emb(\underline{k},M\setminus\partial M)$, $g\in\emb(\underline{l},M\setminus\partial M)$ and $\gamma$ is a homotopy from $f$ to $gv$ which satisfies the stickiness condition. The Riemannian model of the configuration category and the particle model are equivalent \cite[3.2]{pedromichael}. \\
Using the particle model of the configuration category $\con(M\setminus\partial M)$, there is an inclusion map of toplogical grouplike monoids from $\homeo(M\setminus\partial M)$ to $\haut_{N\Fin} (\con (M\setminus\partial M))$. We get a map of classifying spaces
\begin{align}\label{Bmap2}
B \homeo(M\setminus\partial M) \rightarrow B \haut_{N\Fin} (\con (M\setminus\partial M))
\end{align}
Now we can ask wether the map (\ref{Bmap1}) has a factorization through the map (\ref{Bmap2}).

\begin{thm}\label{apl}
We assume that the following condition holds: There is a compact simplicial complex $K\subset M$ of dimension $\kappa$ with $\kappa+3\leq m$ such that $M$ is a \textit{nice neighborhood} (Def. \ref{niceneighb}) of $K$.
Then the broken arrow in the homotopy commutative diagram
\[
\xymatrix@C=35pt@M=8pt@R=20pt{
B\homeo(M\setminus\partial M) \ar[r]^-{(\ref{Bmap1})} & B\haut(q_{M}) \\
B\homeo(M\setminus\partial M) \ar@{=}[u] \ar[r]^-{(\ref{Bmap2})} & B\haut_{N\Fin}(\con(M\setminus\partial M)) \ar@{..>}[u]
}
\] 
can be supplied.
\end{thm}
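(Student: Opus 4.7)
The plan is to follow Weiss's argument in \cite{confcat} and to observe that the only place where the disk-bundle hypothesis of \cite[Thm.~2.1.1]{occupants} is used is through its conclusion, which is now available under the weaker nice-neighborhood hypothesis thanks to our Theorem \ref{generalization}.

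Concretely, one constructs a zig-zag of weak equivalences that realizes (a model of) $q_M$ as a map between section spaces built out of the simplicial-space-over-$N\Fin$ structure of $\con(M\setminus\partial M)$. The target is a section space that models $M\setminus\partial M$ via the weak equivalence $C_1^{\fat}(M\setminus\partial M)\simeq M\setminus\partial M$. The source is a section space that models $\partial M$; by the continuous version of Theorem \ref{generalization} (see Section \ref{conthomotlim}), this is a continuous homotopy limit of the functor $(T,\rho)\mapsto M\setminus V(T,\rho)$, which is itself expressible in terms of the configuration category data via one-element configurations in the appropriate slice categories of $\con(M\setminus\partial M)$.

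First I would set up the two cosimplicial spaces and verify that their totalizations yield the correct models for the source and target of $q_M$. This step uses Lemma \ref{holimcomp} and Remark \ref{resholimcomp} to pass freely between the discrete and continuous homotopy limits, Theorem \ref{generalization} to identify the source with $\partial M$, and the tube lemma (\ref{tubelemma}) together with Corollary \ref{tube} to ensure that the source can be built intrinsically out of the $\con$-over-$N\Fin$ data. Second, I would check that derived homotopy automorphisms of $\con(M\setminus\partial M)$ over $N\Fin$ pull back these section bundles, inducing derived homotopy automorphisms of $q_M$ and hence a map of grouplike topological monoids
\[
\haut_{N\Fin}(\con(M\setminus\partial M))\rightarrow \haut(q_M).
\]
Third, I would verify compatibility with the map from $\homeo(M\setminus\partial M)$: a homeomorphism acts through the same intrinsic description, so the diagram commutes up to a canonical homotopy.

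The hard part will be the intrinsic description of the functor $(T,\rho)\mapsto M\setminus V(T,\rho)$ purely in terms of $\con(M\setminus\partial M)$ over $N\Fin$, without reference to the ambient manifold. This is precisely where the nice-neighborhood hypothesis enters in an essential way, via Theorem \ref{generalization} and the tube lemma; once it is in place, the remainder of the argument is a routine adaptation of Weiss's proof of the disk-bundle case in \cite[\S 5]{confcat}.
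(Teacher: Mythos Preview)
Your proposal is correct and matches the paper's approach exactly: the paper's proof is the single sentence ``Using Theorem \ref{generalization}, the proof is equal to that of \cite[Thm.~5.2.1]{confcat},'' and you have simply unpacked what that entails. The only substantive input beyond \cite{confcat} is the weak equivalence (\ref{boundary}) supplied by Theorem \ref{generalization}, which replaces the disk-bundle conclusion of \cite[Thm.~2.1.1]{occupants}; everything else is indeed a verbatim rerun of Weiss's argument.
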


Using Theorem \ref{boundary}, the proof is equal to that of \cite[Thm. 5.2.1]{confcat}.
There is also a variant with restricted cardinalities. Following \cite[5.3]{confcat}, we need a Postnikov decomposition of the map $q_{M}$. It is well-known that for each integer $a\geq 0$, there is a decomposition
\[
\partial M \rightarrow \wp_a \partial M \rightarrow M
\]
of the inclusion map $\partial M \hookrightarrow M$ such that the homotopy groups of $\wp_a \partial M$ are zero in dimension $\geq a+2$ and equal to the homotopy groups of $\partial M$ in dimension $\leq a+1$. ($\wp_a \partial M$ is obtained from $\partial M$, as a space over $M$,
by killing the relative homotopy groups of $\partial M\to M$ in dimensions $\ge a+2$.) By analogy with this construction, there is a decomposition
\[
\text{holink}(M / \partial M,*) \rightarrow  \wp_a (q_{M}) \rightarrow M \setminus \partial M 
\]
of the map $q_{M}$ where $\wp_a (q_{M})$ has the same properties as $\wp_a \partial M$.

\begin{thm} 
We assume that the following condition holds: There is a compact simplicial complex $K\subset M$ of dimension $\kappa$ with $\kappa+3\leq m$ such that $M$ is a \textit{nice neighborhood} (Def. \ref{niceneighb}) of $K$.
Then the broken arrow in the homotopy commutative diagram
\[
\xymatrix@C=35pt@M=8pt@R=20pt{
B\homeo(M\setminus\partial M) \ar[r]^-{\textup{action}} & B\haut(\,\wp_{(j+1)(m-\kappa -2)}(q_{M})) \\
B\homeo(M\setminus\partial M) \ar@{=}[u] \ar[r]^-{\textup{action}} & B\haut_{N\Fin}\big(\con_{\leq j}(M\setminus\partial M)\big) \ar@{..>}[u]
}
\]
can be supplied. Here the two action maps are the maps (\ref{Bmap1}) and (\ref{Bmap2}) applied to the restricted case.
\end{thm}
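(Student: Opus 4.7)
The plan is to transport the proof of \cite[Theorem 5.3.1]{confcat} along the stronger geometric input provided by the restricted form of Theorem~\ref{generalization}. The hypotheses match exactly: whereas Weiss uses the restricted version of \cite[Thm.~2.1.1]{occupants}, which requires $M$ to carry a smooth disk bundle structure, we may now invoke the nice neighborhood version. No new geometric work should be needed, because the whole force of the nice neighborhood hypothesis has already been absorbed into Theorem~\ref{generalization}; the remaining work is the homotopical bookkeeping of \cite[\S5]{confcat}.

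First I would record the quantitative input: under the standing assumption the canonical map
\[
\partial M \;\longrightarrow\; \holimsub{(T,\rho)\in\con_{\leq j}(M\setminus\partial M)} M\setminus V(T,\rho)
\]
is $(1+(j+1)(m-\kappa-2))$-connected, and by Remark~\ref{resholimcomp} the right hand side may be modelled as a continuous homotopy limit. In this form the target is manifestly functorial in the simplicial space $\con_{\leq j}(M\setminus\partial M)$ viewed over $N\Fin$: it arises as the value at $\con_{\leq j}(M\setminus\partial M)$ of a homotopy-invariant construction $\Theta$ on simplicial spaces over $N\Fin$ (built from section spaces of the tautological bundle whose fibres are the complements $M\setminus V(T,\rho)$) which carries a tautological action by $\haut_{N\Fin}(-)$. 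By construction this reproduces, on the subgroup $\homeo(M\setminus\partial M)\hookrightarrow\haut_{N\Fin}(\con_{\leq j}(M\setminus\partial M))$, the geometric action on $\partial M$ inside $q_M$.

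Next I would pass to a relative Postnikov section over $M\setminus\partial M$ at level $(j+1)(m-\kappa-2)$. Because the comparison map above is $(1+(j+1)(m-\kappa-2))$-connected, this truncation turns it into a weak equivalence of maps to $M\setminus\partial M$; hence we obtain a zig-zag of weak equivalences, natural for $\haut_{N\Fin}(\con_{\leq j}(M\setminus\partial M))$, linking $\wp_{(j+1)(m-\kappa-2)}(q_M)$ with the corresponding truncation of $\Theta(\con_{\leq j}(M\setminus\partial M))$. The latter supports a tautological action of $\haut_{N\Fin}(\con_{\leq j}(M\setminus\partial M))$, and passing to $B(-)$ yields the dotted arrow; homotopy commutativity of the resulting square is immediate from the functoriality of $\Theta$ and the compatibility in the previous paragraph.

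The main obstacle will be formalising $\Theta$ as a genuine homotopy-invariant functor on simplicial spaces over $N\Fin$ that preserves the structure of a map to $M\setminus\partial M$, rather than merely recovering the homotopy type of its source; this is the model-categorical content of \cite[\S5]{confcat}, and the essential point of our argument is to verify that each use there of the disk-bundle hypothesis goes through under the weaker nice neighborhood hypothesis. Once this verification is carried out, the restricted statement follows from the restricted form of Theorem~\ref{generalization} in exactly the way the unrestricted statement (Theorem~\ref{apl}) follows from Theorem~\ref{mainvariation}.
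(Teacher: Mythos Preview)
Your proposal is correct and matches the paper's own treatment: the paper gives no explicit proof for this theorem, the implicit argument being precisely what you describe --- feed the restricted connectivity estimate of Theorem~\ref{generalization} into the machinery of \cite[\S5.3]{confcat} in place of the disk-bundle input from \cite{occupants}. Your sketch of the functor $\Theta$ and the Postnikov truncation step is an accurate summary of that machinery.
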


\end{document}